\newcommand{\blue}{}
\definecolor{mygreen}{rgb}{0.1,0.75,0.2}
 \newtheorem{thm}{Theorem}[section]
 \newtheorem{lem}[thm]{Lemma}
 \newtheorem{prop}[thm]{Proposition}
 \newtheorem{rem}[thm]{Remark}
\numberwithin{equation}{section}
\DeclareMathOperator{\diam}{diam}
\newcommand{\Lra}{\Longrightarrow}
\newcommand{\la}{\langle}
\newcommand{\ra}{\rangle}
\newcommand{\pt}{\partial}
\newcommand{\eps}{\varepsilon}
\providecommand{\bbs}[1]{\left(#1\right)}
\newcommand{\aaa}[1]{\begin{equation}
begin{aligned} #1 \end{aligned}
\end{equation}}
\newcommand{\ud}{\,\mathrm{d}}
\newcommand{\8}{\infty}
\newcommand{\nn}{\mathcal{N}}
\newcommand{\my}{\mathbf{y}}
\newcommand{\hs}{\mathcal{H}}
\newcommand{\bR}{\mathbb{R}}
\newcommand{\sL}{\mathcal{L}}
\DeclareMathOperator*{\argmax}{argmax}
\DeclareMathOperator*{\argmin}{argmin}
\renewcommand{\vec}[1]{\ensuremath{\boldsymbol{#1}}}
\begin{document}

\title[Irreversible drift-diffusion  process]{Random walk approximation   for irreversible  drift-diffusion process on manifold:  ergodicity, unconditional stability and convergence}

\author[Y. Gao]{Yuan Gao}
\address{Department of Mathematics, Purdue University, West Lafayette, IN}
\email{gao662@purdue.edu}

\author[J.-G. Liu]{Jian-Guo Liu}
\address{Department of Mathematics and Department of
  Physics, Duke University, Durham, NC}
\email{jliu@math.duke.edu}


\begin{abstract}
Irreversible drift-diffusion processes are very common in biochemical reactions. They have a non-equilibrium stationary state (invariant measure)  which does not satisfy detailed balance. For the corresponding Fokker-Planck equation on a closed manifold, using Voronoi tessellation,  we propose two upwind finite volume  schemes with or without  the information of the invariant measure. Both schemes possess  stochastic $Q$-matrix structures and can be decomposed as a gradient flow part and a Hamiltonian flow part, enabling us to prove   unconditional stability,  ergodicity and error estimates. Based on the two upwind schemes, several numerical examples - including  sampling accelerated by a mixture flow, image transformations and simulations for stochastic model of chaotic system - are conducted. These two structure-preserving schemes also give a natural random walk approximation for a generic irreversible drift-diffusion process on a manifold. This makes them suitable for adapting to manifold-related computations that arise from high-dimensional molecular dynamics simulations.
\end{abstract}

\keywords{Symmetric decomposition, non-equilibrium thermodynamics,  enhancement by mixture, exponential ergodicity, structure-preserving upwind scheme}
\subjclass[2010]{60H30, 60H35, 65M75, 65M12}

\maketitle

\section{Introduction}

A general stationary (time-homogeneous) dynamical system with  white noise, can be modeled by  a stochastic differential equation for $\my_t\in \bR^\ell$
\begin{equation}\label{sde-x}
\ud \my_t = \vec{b}(\my_t) \ud t + \sqrt{2}\sigma  \ud B_t,
\end{equation}
where $\sigma$ is a   noise matrix and { $B_t$ is an $\ell$-dimensional Brownian motion}. 
Denote $D:=\sigma \sigma^T\in \bR^{\ell \times \ell}$. For simplicity, we assume $D$ is a constant positive semi-definite matrix. By Ito's formula,  SDE \eqref{sde-x} gives the following Fokker-Planck equation, which is the master equation for   the time marginal density $\rho_t(\my)$
\begin{equation}\label{FP-N}
\pt_t \rho =-\nabla\cdot(\vec{b} \rho)+ \nabla \cdot  ( D\nabla \rho )=: \sL^* \rho.
\end{equation}
In some physical systems,  the drift vector field $\vec{b}=-D\nabla \varphi$ for some  potential $\varphi$ representing the energy landscape. Then the Gibbs measure $\pi(\my)\propto e^{-\varphi(\my)}$ is the invariant measure.   The simplest example is the Ornstein-Uhlenbeck process with $\vec{b}(\my_t)=-\gamma \my_t$ and the diffusion coefficient $\sigma= \sqrt{\eps\gamma}$. In this case, \eqref{sde-x} is called  Langevin dynamics, and the corresponding Fokker-Planck equation has a gradient flow structure; see \eqref{KL}. In this Langevin dynamics case, the Markov process defined by \eqref{sde-x} is reversible\footnote{In some physics literature, it is referred as microscopic reversibility \cite{Onsager}.}, i.e., if we take $\pi$ as the initial density, the time-reversed process has the same law as that of  the forward process.  Equivalently, the invariant measure satisfies the detailed balance condition 
\begin{equation}
\text{steady flux }\, F^\pi := -\vec{b} \pi+ D \nabla\pi   = 0.
\end{equation} 

However,  numerous dynamical systems in physics and biochemistry are described by irreversible Markov processes (without detailed balance), i.e., there does \textit{not} exist a potential function such that the drift vector field $\vec{b}= -D\nabla \varphi$  in \eqref{sde-x}. For instance, the stochastic Lorenz system, the Belousov–Zhabotinsky reaction, or the Hodgkin–Huxley model  describe the  excitation and propagation  of sodium and potassium ions in a neuron. The irreversibility in the nonequilibrium circulation balance is almost literally the primary characteristic of life activities \cite{hill2005free}. In this case, the invariant measure $\pi$ is still stationary in time, but there is a positive  entropy production rate;  see \eqref{entropyP}. Thus \textsc{Prigogine}  named such an  invariant measure $\pi$ as   ``stationary non-equilibrium  states'' or ``non-equilibrium steady states'' in \cite[Chapter VI]{prigogine1968introduction}. We will simply call it steady state or invariant measure. Later, \text{Hill} explains Prigogine's theory  using Markov chain stochastic models for some simple biochemical reactions such as  muscle contraction and  clarifies the formula \eqref{entropyP} for the entropy production rate \cite[eq. (9.20)]{hill2005free}.  Another situation is that for  a Markov process on manifold, which is induced via dimension reductions (such as diffusion map \cite{CKLMN}) from a higher dimensional Markov process based on collected data, some classical schemes such as the Euler-Maruyama scheme will break the detailed balance property. 

Therefore, in this paper
we focus on designing numerical schemes  to simulate a general irreversible Markov process on a closed manifold $\nn$; see \eqref{sde-y}.   In terms of the SDE, we will design a random walk approximation which enjoys   ergodicity and accuracy. In terms of  Fokker-Planck equation \eqref{FP-N}, we will design two upwind schemes with a $Q$-matrix\footnote{a.k.a. infinitesimal generator matrix for a Markov chain} structure so that they also enjoy ergodicity, unconditionally stability and accuracy.

Assume $\nn$ is a $d$-dimensional closed manifold which is smooth enough. Let $\vec{b}\in T_\nn$ be a given tangent vector field. 
We denote the over-damped Langevin dynamics of $\my$ by
\begin{equation}\label{sde-y}
\ud {\my_t} = \vec{b}(\my_t) \ud t + \sqrt{2} \sigma \sum_{i=1}^d \tau^\nn_{i}(\my_t)\otimes \tau^\nn_i(\my_t) \circ \ud B_t,
\end{equation}
where  $\sigma\in \bR^{\ell \times \ell}$ is a constant matrix  corresponding to the thermal energy in physics, the symbol $\circ$ means the Stratonovich  integral, $B_t$ is $\ell$-dimensional Brownian motion and $\{\tau^\nn_i;\, 1\leq i \leq d\}$ are orthonormal basis of tangent plane $T_{\my_t}\nn$.
Here $\nabla_\nn := \sum_{i=1}^d \tau^{\nn}_i \nabla_{\tau^{\nn}_i}= \sum_{i=1}^d \tau^{\nn}_i \otimes \tau^{\nn}_i \nabla$ is the surface gradient and $\nabla_{\tau^{\nn}_i}=\tau^{\nn}_i \cdot \nabla$ is the tangential derivative in the direction of $\tau^{\nn}_i$.  More precise conditions on the manifold are described in \cite[Chapter 4]{ikeda}, \cite{hsu2002stochastic}. By Ito's formula, the corresponding Fokker-Planck equation is
\begin{equation*}
\pt_t \rho =-\nabla_\nn\cdot(\vec{b} \rho)+ \nabla_\nn \cdot  ( D\nabla_\nn \rho ).
\end{equation*}
For simplicity of notation, we drop subscript $\nn$ and still use \eqref{FP-N}.

In terms of  a general Fokker-Planck equation on a closed manifold $\nn$, if without detailed balance, we can  decompose it as a gradient flow part (described by a symmetric operator) and a Hamiltonian flow part (described by an antisymmetric operator); see Section \ref{sec_2.2_decom}. It is important to design numerical schemes that preserves this structure in the discrete sense. Thus in Section \ref{sec2}, based on the Voronoi tessellation for manifold $\nn$, we will develop two upwind finite volume schemes preserving stochastic $Q$-matrix structures and the discrete decompositions. 
The first upwind scheme does not rely on knowing the steady state $\pi$ of   irreversible dynamics \eqref{sde-y}; see Section \ref{sec_2.1_scheme1}. The second upwind scheme, which is called $\pi$-symmetric upwind scheme, leverages a given steady state information $\pi$ to simulate   irreversible dynamics \eqref{sde-y} and also enjoys   ergodicity to this given steady state; see Section \ref{sec_2.3_scheme2}. More importantly, schemes with  stochastic $Q$-matrix structures always have a $\pi$-symmetric decomposition, which decomposes the discrete flux as a symmetric part (corresponding to a dissipation part)
 and an antisymmetric part (corresponding to an energy-conservative part); see Section \ref{sec3}. The later part has no contribution to the discrete energy dissipation, so we have   same stability and ergodicity properties for both   schemes; see Proposition \ref{prop_energy} and Lemma \ref{lem_longtime}. Moreover, based on the $Q$-matrix structure, we will propose an unconditionally stable explicit time discretization and prove its stability and exponential ergodicity; see Proposition \ref{prop_timeD}.

In Section \ref{sec4}, we will give  convergence and  error estimates for the numerical steady state and the numerical dynamic solutions solved by the first upwind scheme \eqref{mp}, which rely on the Taylor expansion on the manifold and the discrete energy dissipation law; see Theorem \ref{thm_con}. In Section \ref{sec5}, several numerical examples based on   upwind scheme \eqref{mp} and $\pi$-symmetric upwind scheme \eqref{mp-pi} are conducted: (i) accelerated   sampling enhanced by an incompressible mixture flow; (ii) image transformations immersed in a mixture flow; and (iii) the stochastic Van der Pol oscillator in which  we only know  the  drift vector field $\vec{b}$ in the irreversible  process. In the sampling and image examples, it is interesting to see the convection (the Hamiltonian flow part in the scheme) brought by the  incompressible mixture flow speed up   convergence of the dynamic solution   to its steady state, which could be a promising direction to explore further in the future.

{ For  Fokker-Planck equation \eqref{FP-N} in a bounded open domain  $\Omega\subset\bR^\ell$ with various boundary conditions, there are many pioneering studies on  numerical  simulation. The most famous one is the Scharfetter-Gummel (SG) scheme proposed in \cite{scharfetter1969large} for some 1D semiconductor device equations; see Appendix \ref{app:SG} for detailed comparisons. Some extensions and mathematical analysis have been studied; e.g.,  \cite{markowich1985stationary, markowich1988inverse,  bank1998finite, xu1999monotone}
and recently in \cite{chainais2020large}. We will follow similar ideas of a finite volume method  for the drift-diffusion equation \cite{eymard2000finite, bessemoulin2012finite, chainais2003finite} but place more emphasis  on the new Markov chain structures and $\pi$-symmetric decomposition.  We develop the structure preserving schemes described above which possess stochastic $Q$-matrix structure and discrete $\pi$-symmetric decomposition. Particularly, our scheme also preserves   irreversibility and recovers the original irreversible invariant measure. In summary, for our scheme \eqref{mp-pi}, one has all the good properties including (i) positivity preserving; (ii) total mass preserving; (iii) well-balance property; (iv) $\ell^1$-contraction; (v) the discrete $\pi$-symmetric decomposition \eqref{decom};   (vi) energy dissipation law; and (vii) ergodicity.   Both of these schemes \eqref{mp} and \eqref{mp-pi} naturally provide a random walk approximation for irreversible Markov processes on manifolds and thus also more easily adaptable to some data-driven algorithms based on high dimensional point clouds; c.f., \cite{GLW20, GLLL21}. We also refer to  \cite{chainais2022long} for exponential ergodicity of a hybrid finite volume schemes and refer to \cite{schlichting2022scharfetter} for upwind schemes of aggregation--diffusion equations. The idea for the design of numerical schemes for these two recent work rooted in   \cite{schlichting2022scharfetter} but using a nonlinear approach which do  not have $Q$-matrix properties.
}

The remaining paper will be organized as follows. In Section \ref{sec2}, we propose two upwind schemes with $Q$-matrix structure for  the irreversible process \eqref{sde-y}. In Section \ref{sec3}, we give the discrete $\pi$-symmetric decomposition for a generic irreversible process on a closed manifold. Based on this, an energy dissipation law and exponential ergodicity are proved for both  schemes. In Section \ref{sec4}, we give  convergence and error estimates in terms of the $\chi^2$-divergence. In Section \ref{sec5}, several 
numerical examples with/without steady state information are presented. Two schemes in the 2D structured grids case with no-flux boundary condition are given in appendix for completeness.

\section{Two upwind schemes as random walk approximations for irreversible process}\label{sec2}
This section focuses on constructing random walk approximations for irreversible drift-diffusion process \eqref{sde-y}.  The approximations are proposed based on some upwind finite volume schemes for the corresponding Fokker-Planck equation \eqref{FP-N}. 

We focus on two kinds of fundamental problems in numerical simulations for irreversible process, i.e., the  drift vector field $\vec{b}$ does not satisfy the detailed balance condition $\vec{b}=-D\nabla \phi$.

The first kind of problem is we only know  the  drift vector field $\vec{b}$ without steady state information.
In this case, we will design an upwind scheme \eqref{mp} with $Q$-matrix structure based on the Voronoi tessellation for manifold $\nn$ to solve both  numerical steady state and simulate the dynamic process described by \eqref{FP-N}; see Section \ref{sec_2.1_scheme1}.

The second kind of problem is that in many applications, we have  information about the invariant measure $\pi$, although it is not detailed balanced (pointwise steady flux $F^\pi \neq 0$). This kinds of ``steady state with  nonzero flux on network'' happens very common in biochemistry. In this case, we will design a $\pi$-symmetric upwind   scheme \eqref{mp-pi} which recovers the given steady state $\pi$; see Section \ref{sec_2.3_scheme2}. This scheme is motivated by a reformulation as a gradient flow part and a Hamiltonian flow part for the continuous Fokker-Planck equation; see Section \ref{sec_2.2_decom}. Detailed analysis  for upwind schemes with generic $Q$-matrix structure will be given in Section \ref{sec3}.

\subsection{Voronoi tessellation and upwind finite volume scheme}\label{sec_2.1_scheme1}
In this section, we first propose a finite volume scheme for the  Fokker-Planck equation \eqref{FP-N} based on a   Voronoi tessellation  for $\nn$. Then we design an upwind finite volume scheme with a $Q$-matrix structure, which can be	 reformulated  as a Markov  process on finite sites and enjoys good properties. 

Suppose $(\nn, d_\nn)$ is a $d$ dimensional smooth closed submanifold of $\mathbb{R}^\ell$ and $d_{\nn}$ is  induced by the Euclidean metric in $\bR^\ell$. $S:=\{\my_i\}_{i=1:n} $ are point clouds sampled from a density function on $\nn$ bounded below and above. It is proved that the data points $S$ are well-distributed on $\nn$  whenever the points are sampled from a density function with lower and upper bounds \cite{Dejan15}. Define the Voronoi cell as
\begin{equation}
C_i:= \{\my\in \nn ; \ud_\nn(\my,\my_i)\leq \ud_\nn(\my,\my_j) \text{ for all }\my_j\in S\} \quad  \text{ with volume } |C_i|=\hs^d(C_i).
\end{equation}
{ Here $\hs^d(C_i)$ the $d$-dimensional Hausdorff measure of cell $C_i$.}
Then $\nn=\cup_{i=1}^n  C_i$ is a Voronoi tessellation of  $\nn$. Denote the Voronoi face for cell $C_i$ as
\begin{equation}
\Gamma_{ij}:= C_i\cap C_j   \text{ with its area  } |\Gamma_{ij}|=\hs^{d-1}(\Gamma_{ij}),
\end{equation}
for any $j=1, \cdots, n$. If $\Gamma_{ij}= \emptyset$ or $i =  j$ then we set $|\Gamma_{ij}|=0$. 
Define the associated adjacent sample points as
\begin{equation}
VF(i):=\{j; ~\Gamma_{ij}\neq \emptyset\}.
\end{equation}

Using the above Voronoi tessellation, associated the discrete density $\rho_i|C_i|$ at cell $C_i$ and the sign of the flux at each site, we design the upwind scheme as follows.

For each cell $C_i$, denote the unit outer normal vector field on $\pt C_i$ (pointing from $i$ to its adjacent $j$) as $\vec{n}\in \bR^\ell$.     Denote
$
(\vec{b} \cdot \vec{n})^+_{ij}, \, (\vec{b} \cdot \vec{n})^-_{ij}>0
$
as the positive and negative parts of $(\vec{b} \cdot \vec{n})_{ij}$ respectively, where $(\vec{b} \cdot \vec{n})_{ij}$ means evaluate $(\vec{b} \cdot \vec{n})$ at the intersection point of the  geodesic from $\my_i$ to $\my_j$. 
We integrate \eqref{FP-N} on $C_i$ and use the divergence theorem on cell $C_i$ to obtain
\begin{equation}\label{tt310}
\frac{\ud }{\ud t} \int_{C_i} \rho \hs^d(C_i) = \sum_{j\in VF(i) } \int_{\Gamma_{ij}}  \mathbf n \cdot \bbs{-\vec{b} \rho + D \nabla \rho} \hs^{d-1}(\Gamma_{ij}).
\end{equation}
The 
probability in $C_i$ can be approximated as
\begin{equation}\label{alg11}
\int_{C_i} \rho(\my)  \hs^d(C_i) \approx \rho(\my_i) (1+\diam (C_i)) |C_i|,
\end{equation}
so we use $\rho_i$ to approximate the exact solution $\rho(\my_i)$ on each cell $C_i$.

We introduce the following upwind finite volume scheme and call it ``upwind scheme''.
For $i=1, \cdots, n$,
\begin{equation}\label{mp}
\frac{\ud}{\ud t}\rho_i |C_i|=  \sum_{j\in VF(i)} |\Gamma_{ij}| \bbs{\frac{\vec{n}_{ij}\cdot D\vec{n}_{ij} \bbs{\rho_j-\rho_i}}{|\my_j-\my_i|}   + (\vec{b}\cdot \vec{n})_{ij}^- \rho_j -  (\vec{b}\cdot \vec{n})_{ij}^+ \rho_i },
\end{equation}
where $|C_i|$ is the volume element at cell $C_i$.

One can recast \eqref{mp} as a matrix form
\begin{equation}\label{rhoDeq}
\frac{\ud}{\ud t}\rho_i { |C_i|} = \sum_j Q^*_{ij} \rho_j|C_j|, \quad i=1, \cdots,n,
\end{equation}
where the $Q^*$-matrix is given by
\begin{equation}
Q^*_{ij} =
 \frac{|\Gamma_{ij}|}{|C_j|} \bbs{\frac{\vec{n}_{ij}\cdot D\vec{n}_{ij}}{|\my_j-\my_i|}  + \bbs{\vec{b} \cdot n}^-_{ij} }\geq 0, \quad j\neq i, 
\qquad Q^*_{ii}= \sum_{j\in VF(i) } \frac{|\Gamma_{ij}|}{|C_i|} \bbs{-\frac{\vec{n}_{ij}\cdot D\vec{n}_{ij}  }{|\my_j-\my_i|}  - \bbs{\vec{b} \cdot \vec{n}}^+_{ij}  }. 
\end{equation}
Since for any two adjacent $i$ and $j$, we have 
\begin{equation}\label{sys}
\vec{n}_{ij} = - \vec{n}_{ji}, \quad  \bbs{\vec{b} \cdot \vec{n}}^-_{ji} = \bbs{\vec{b} \cdot \vec{n}}^+_{ij} .
\end{equation}
Thus the transport of $Q^*$ satisfies
\begin{equation}\label{Qm}
\begin{aligned}
& Q_{ij} =
 \frac{|\Gamma_{ij}|}{|C_i|} \bbs{\frac{ \vec{n}_{ij}\cdot D\vec{n}_{ij}  }{|\my_j-\my_i|} + \bbs{\vec{b} \cdot \vec{n}}^-_{ji} } =  \frac{|\Gamma_{ij}|}{|C_i|} \bbs{\frac{ \vec{n}_{ij}\cdot D\vec{n}_{ij}  }{|\my_j-\my_i|} + \bbs{\vec{b} \cdot \vec{n}}^+_{ij} } \geq 0 , \quad j\neq i, \\
& Q_{ii}= -\sum_{j\in VF(i) }Q_{ij} .
\end{aligned}
\end{equation}
 One can see $Q$-matrix is a stochastic matrix that row sums zero; see \cite[Definition 2.3]{liggett2010}.
  Then $Q$ is the generator of the associated Markov process on point clouds.
We list the following standard properties for $Q$-process \eqref{rhoDeq}, which  guarantee good properties for the numerical scheme:
\begin{itemize}\label{sss}
\item positivity preserving, i.e., $\min_i\rho_i(0)\geq 0 \Lra \min_i \rho_i(t)\geq 0$;
\item total mass preserving, i.e., $\frac{\ud}{\ud t} \sum_i \rho_i|C_i| = \sum_{i,j} Q^*_{ij}\rho_j =0$;
\item $\ell^1$-contraction, i.e., $\frac{\ud}{\ud t} \sum_i |\rho_i - \tilde{\rho}_i||C_i| \leq  \sum_{i,j} Q^*_{ij}|\rho_j - \tilde{\rho}_j| = 0   $.
\end{itemize}
For the adjoint process $\pt_t f_i = \sum_j Q_{ij}f_j$, it satisfies maximal principle, i.e., for $i_m:=\argmin_i f_i$ and $i_M:=\argmax_i f_i,$ 
$$\frac{\ud}{\ud t}  f_{i_m} = \sum_j Q_{i_m j}\bbs{f_j-f_{i_m}}  \geq 0 ; \qquad \frac{\ud}{\ud t}  f_{i_M} = \sum_j Q_{i_M j}\bbs{f_j-f_{i_M}}  \leq 0.$$
When the exact metric for the manifold is unknown, we can also use collected point clouds which probe the manifold to compute an approximated Vonoroi tessellation; see details in \cite[Algorithm 1]{GLW20}. { We also give a comparison between our scheme and previous finite volume schemes in Appendix \ref{app:SG}.}
\begin{rem}
One can interpret the upwind finite volume scheme \eqref{mp} as
 the forward equation for a Markov process with
 transition probability $P_{ji}$ (from $j$ to $i$) and jump rate $\lambda_j$
\begin{equation}\label{mp1}
\frac{\ud}{\ud t}\rho_i |C_i| = \sum_{j\in VF(i)} \lambda_j P_{ji} \rho_j |C_j| - \lambda_i \rho_i |C_i|,\quad i=1, 2, \cdots, n,
\end{equation}
where for  $i=1, 2, \cdots, n,$
\begin{equation}\label{def59}
\begin{aligned}
\lambda_i := \sum_{j\neq i} Q_{ij}, \quad 
 P_{ij}:=\frac{Q_{ij}}{\lambda_i}, \quad j\in VF(i); \quad P_{ij}=0, \quad j\notin VF(i).
 \end{aligned}
\end{equation}
 Using the exponential distribution $f(t;\lambda)=\lambda e^{-\lambda t}$ with rate $\lambda$, {\blue one common construction of $Q$-process with generator $Q$ is given by Gillespie's algorithm in the Monte Carlo simulation, i.e.,  given $X_t=i$, the probability for the event $\{ \text{after waiting time } \tau_i, \text{ the jump } i \text{ to } j, X_{t+\tau}=j  \text{ happens} \} $ is given by 
$
 P_{ij} \, \lambda_i e^{-\lambda_i \tau}.
$
This time-continuous Markov chain is an exact construction for the $Q$-process. There are also other time-discrete Markov chain constructions, which serve as good approximations for $Q$-process when time step $\Delta t\to 0$. For instance, let us assume (i) $X_t=i$, (ii) the probability for $X_t$ stays at site $i$ is $e^{-\lambda_i}\Delta t \approx 1-\lambda_i \Delta t$,  (iii) the probability for the jump $i$ to $j$ happens in $[t,t+\Delta t]$ is $\lambda_i \Delta t e^{-\lambda_i \Delta t} \approx\lambda_i \Delta t$. Then the corresponding master equation is just   the forward Euler scheme for \eqref{mp1}
\begin{equation}
\rho^{k+1}_{i} = \rho_i^k (1-\lambda_i \Delta t) + \sum_j \rho_j^k P_{ji}\lambda_j \Delta t.
\end{equation}
The unconditionally stable explicit scheme \eqref{timeG} is another example for the time-discrete Markov chain construction of the $Q$-process.
}
\end{rem}

\begin{rem}
The upwind finite volume scheme \eqref{mp} belongs to monotone schemes uniform in $D$. It is well known 	all monotone schemes that uniform in $D$ have at most first order accuracy. To achieve higher order schemes for convection-dominated problems that enjoy the above positivity preserving property and still uniform in $D$, one need to restore  some nonlinear schemes by  the method of limiter or streamline diffusion methods.  However, the nonlinearity will destroy the $Q$-matrix structure and we will leave  high order schemes for a future study. On the other hand,  in the case that diffusion $D$ is not small, standard centered scheme can be adapted as a second order scheme. 
\end{rem}

\subsection{Reformulate as gradient flow structure and Hamiltonian structure }\label{sec_2.2_decom}
Recall the continuous Fokker-Planck operator $\sL^*$ in \eqref{FP-N}.
From now on, to ensure existence of a positive invariant measure $\pi$,  we assume $D$ is positive definite. Based on \cite[Proposition 4.5]{ikeda}, we know  the Fokker-Planck operator $\sL^*$ on the compact manifold $\nn$ has a unique invariant measure, denoted as $\pi$, 
\begin{equation}\label{FPequi}
\begin{aligned}
0=\sL^* \pi = -\nabla \cdot(\vec{b} \pi) + \nabla \bbs{D  \nabla \pi}.
\end{aligned}
\end{equation}

We first use the invariant measure $\pi$ to decompose \eqref{FP-N} as two parts: gradient flow part and Hamiltonian flow part.
Rewrite \eqref{FP-N} as
\begin{equation}\label{FPr}
\begin{aligned}
\pt_t \rho =&\sL^* \rho =\nabla \cdot \bbs{D \nabla \rho - \vec{b} \rho}
= \nabla \cdot \bbs{ D \pi \nabla \frac{\rho}{\pi} +  \frac{\rho}{\pi} \bbs{D\nabla \pi -   \pi \vec{b}} }\\
 =& \nabla \cdot \bbs{ D \pi \nabla \frac{\rho}{\pi}} +   \bbs{D\nabla \pi -   \pi \vec{b}} \cdot \nabla \frac{\rho}{\pi} 
 =: L^* \frac{\rho}{\pi} + T \frac{\rho}{\pi}.
\end{aligned}
\end{equation} 
Here the symmetric operator $L^*$ is 
\begin{equation}
L^*:= \nabla \cdot\bbs{D \pi \nabla}, \quad \text{with }\,  \la f, L^* g\ra = \la L^* f, g\ra,  \quad \la u, L^* u \ra \leq 0,
\end{equation}
while the antisymmetric operator $T$ is 
\begin{equation}
  T:= \bbs{D\nabla \pi -   \pi \vec{b}} \cdot \nabla=: \vec{u}\cdot \nabla, \quad \text{ with }\,  \la f, T g \ra = -\la T f, g\ra
\end{equation}
since $\nabla \cdot \bbs{D\nabla \pi -   \pi \vec{b}} = \nabla \cdot \vec{u}=0$ by \eqref{FPequi}. Here we remark that in terms of $\rho$-variable, we usually say $\nabla \cdot\bbs{D \pi \nabla\frac{1}{\pi}}$ (resp. $\vec{u} \cdot \nabla \frac{1}{\pi}$) is symmetric (resp. antisymmetric) operator in $L^2(\frac{1}{\pi}).$ We will call this decomposition as `$\pi$-symmetric decomposition'.

In the irreversible case,   \eqref{FPr} for the reformulated Fokker-Planck equation can be regarded as a gradient flow part $\pt_t \rho = L^* \frac{\rho}{\pi} =\nabla \cdot \bbs{ D \pi \nabla \frac{\rho}{\pi}} $ plus a Hamiltonian flow part (convection part)
$\pt_t \rho = T \frac{\rho}{\pi} = \bbs{D\nabla \pi -   \pi \vec{b}} \cdot \nabla \frac{\rho}{\pi}$. Indeed, 
given a convex function $\phi$ with $\phi''\geq 0$, denote  the free energy as
\begin{equation}\label{EE}
E := \int \phi\bbs{\frac{\rho}{\pi}} \pi \ud x.
\end{equation} 
Since $\nabla \cdot \bbs{D\nabla \pi -   \pi \vec{b}}=0$, for any free energy of the form \eqref{EE}, we have
\begin{equation}\label{ham}
\la \frac{\delta E}{\delta \rho}, T\frac{\rho}{\pi} \ra = \la \phi'\bbs{\frac{\rho}{\pi}}, T \frac{\rho}{\pi} \ra =\int \bbs{D\nabla \pi -   \pi \vec{b}} \cdot \nabla \phi\bbs{\frac{\rho}{\pi}} \ud x = 0.
\end{equation}
For the gradient flow part, we have
\begin{equation}
\la \frac{\delta E}{\delta \rho}, L^*\frac{\rho}{\pi} \ra = \la \phi'\bbs{\frac{\rho}{\pi}}, L^* \frac{\rho}{\pi} \ra =-\int  \phi''\bbs{\frac{\rho}{\pi}} \pi \nabla \frac{\rho}{\pi}\cdot D \nabla \frac{\rho}{\pi}   \ud x \leq 0.
\end{equation}
  This observation is quite similar to the so called  GENERIC (general equation for non-equilibrium reversible-irreversible coupling) formalism, which also decomposes a general thermodynamics as a gradient flow part and a Hamiltonian flow part.

From \eqref{ham}, in terms of the energy dissipation law, the Hamiltonian part has no contribution. 
From now on, we denote $c$ as a generic constant whose value may change from line to line. 

Below we summarize the following lemma for the energy dissipation law for continuous equation and later we will use it to derive a corresponding discrete energy dissipation law.
\begin{lem}\label{lem_E}
Let $E$ be an entropy   defined  in \eqref{EE}, we have the energy dissipation law
\begin{equation}
\frac{\ud E}{\ud t} =  -\int \phi''\bbs{\frac{\rho}{\pi}}\pi \nabla \frac{\rho}{\pi}\cdot D \nabla \frac{\rho}{\pi} \ud x\leq 0.
\end{equation}
\end{lem}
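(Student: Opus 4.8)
The plan is to differentiate the free energy $E$ directly in time and then recognize the result as the $L^2$-pairing of the variational derivative $\frac{\delta E}{\delta \rho}$ against $\pt_t \rho$, so that the $\pi$-symmetric decomposition \eqref{FPr} and the two pairings already computed in \eqref{ham} (and in the line immediately following it) can be plugged in verbatim. The whole argument is essentially an assembly of facts already recorded above.

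First I would differentiate under the integral sign. Since the weight $\pi$ is independent of time and only $\rho$ evolves, the chain rule gives
\begin{equation*}
\frac{\ud E}{\ud t} = \int \phi'\bbs{\frac{\rho}{\pi}} \frac{\pt_t \rho}{\pi}\, \pi \ud x = \int \phi'\bbs{\frac{\rho}{\pi}} \pt_t \rho \ud x = \la \frac{\delta E}{\delta \rho}, \pt_t \rho \ra,
\end{equation*}
where in the last step I use that the $L^2$ variational derivative of $E$ is exactly $\frac{\delta E}{\delta \rho} = \phi'\bbs{\frac{\rho}{\pi}}$; this identity is immediate from computing $\frac{\ud}{\ud \eps} E(\rho + \eps \eta)|_{\eps = 0} = \int \phi'\bbs{\frac{\rho}{\pi}}\,\eta \ud x$ for an arbitrary test perturbation $\eta$.

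Next I would insert the decomposition $\pt_t \rho = L^* \frac{\rho}{\pi} + T \frac{\rho}{\pi}$ from \eqref{FPr} and split the pairing by bilinearity:
\begin{equation*}
\frac{\ud E}{\ud t} = \la \frac{\delta E}{\delta \rho}, L^* \frac{\rho}{\pi} \ra + \la \frac{\delta E}{\delta \rho}, T \frac{\rho}{\pi} \ra.
\end{equation*}
The Hamiltonian (antisymmetric) contribution vanishes by \eqref{ham}, whose proof rests on $\nabla \cdot \bbs{D\nabla \pi - \pi \vec{b}} = 0$ — that is, on $\pi$ solving the stationary equation \eqref{FPequi} — together with integration by parts on the closed manifold $\nn$. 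The gradient-flow (symmetric) contribution is precisely the inequality displayed just before the lemma: integrating by parts against $L^* = \nabla \cdot \bbs{D\pi \nabla}$ produces $-\int \phi''\bbs{\frac{\rho}{\pi}} \pi \nabla \frac{\rho}{\pi} \cdot D \nabla \frac{\rho}{\pi} \ud x$, which is $\leq 0$ because $\phi'' \geq 0$ and $D$ is positive definite. Collecting the two terms yields the claimed identity together with its sign.

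The only points genuinely requiring care — and thus the main obstacle, such as it is — are the justification of differentiation under the integral and the absence of boundary terms in the integrations by parts. The former is guaranteed by the smoothness of $\nn$ and $\vec{b}$, by the positivity and regularity of the invariant measure $\pi$ (ensured by $D$ positive definite via \eqref{FPequi}), and by the strict positivity of $\rho$, which keeps $\frac{\rho}{\pi}$ and $\phi'\bbs{\frac{\rho}{\pi}}$ well defined. The latter is automatic since $\nn$ is compact without boundary, so no flux terms survive; this closedness hypothesis is exactly the structural feature that makes the Hamiltonian part energy-conservative.
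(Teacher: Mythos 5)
Your proposal is correct and follows essentially the same route as the paper: the lemma is exactly the assembly of the computation in \eqref{ham} (the Hamiltonian part contributing zero, via $\nabla\cdot\vec{u}=0$ and the closedness of $\nn$) with the displayed pairing $\la \frac{\delta E}{\delta \rho}, L^*\frac{\rho}{\pi}\ra \leq 0$ immediately following it, after writing $\frac{\ud E}{\ud t} = \la \phi'\bbs{\frac{\rho}{\pi}}, \pt_t\rho\ra$ and inserting the decomposition \eqref{FPr}. Your added care about differentiation under the integral and the absence of boundary terms is consistent with, and slightly more explicit than, the paper's presentation.
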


Specially, it is well known that the reversible condition (detailed balance condition) is equivalent to $\vec{b}=D \nabla \log \pi$, i.e., there is only symmetric part $\pt_t \rho = L^* \frac{\rho}{\pi}$. 
In this case, there are particularly two well known gradient flow structures:
\begin{enumerate}[(i)]
\item Take $\phi(x)=x\log x$ then free energy becomes $\text{KL}(\rho||\pi) = \int \rho \log \frac{\rho}{\pi} \ud x$, and we have the gradient flow
\begin{equation}\label{KL}
\pt_t \rho = \nabla \cdot \bbs{\rho D \nabla \frac{\delta \text{KL}}{\delta \rho}}; \quad \frac{\ud}{\ud t} \text{KL}(\rho||\pi)  = - \la \rho   \nabla \log \frac{\rho}{\pi}, D \nabla \log \frac{\rho}{\pi} \ra \leq 0;
\end{equation}
\item Take $\phi(x) =\frac12 x^2$, then free energy becomes  $\chi^2$-divergence $\chi^2(\rho)=\frac12 \int \frac{\rho^2}{\pi} \ud x$, and we have the gradient flow
\begin{equation}
\pt_t \rho = \nabla \cdot \bbs{\pi D \nabla \frac{\delta \chi^2(\rho)}{\delta \rho}}; \quad \frac{\ud}{\ud t} \chi^2(\rho)  = - \la \pi \nabla \frac{\rho}{\pi},  D \nabla \frac{\rho}{\pi} \ra \leq 0;
\end{equation}
Particularly, take $\phi(x) =\frac12 (x-1)^2$, then we have the decay estimate for $\frac12 \int \frac{(\rho-\pi)^2}{\pi} \ud x$, 
\begin{equation}
 \frac{\ud}{\ud t} \frac12 \int \frac{(\rho-\pi)^2}{\pi} \ud x  = -  \la \pi \nabla \frac{\rho}{\pi},  D \nabla \frac{\rho}{\pi} \ra\leq 0.
\end{equation}
This together with Poincare's inequality 
$$\int |u|^2 \pi \ud x \leq c \la \pi \nabla u, D \nabla u \ra \quad \text{ for } \int u \pi \ud x =0,$$ 
yields the exponential decay of $\rho$ to steady state $\pi$. 
\end{enumerate}

Thanks to   Poincare's inequality,   $\pi$-symmetric decomposition \eqref{FPr} and Lemma \ref{lem_E}, we conclude that in terms of exponential ergodicity, there is no difference between the irreversible process and the corresponding  reversible process.
Indeed,  the equivalence of the exponential ergodicity for the irreversible process and the corresponding  reversible process was already established in \cite{Chen_2000}; particularly for countable state space. Moreover, besides the gradient flow part, the incompressible transport $\vec{u}\cdot\nabla \frac{\rho}{\pi}$ brought by $\vec{u}=D\nabla \pi -\pi \vec{b}$ usually results in mixture. Thus we will observe a speedup of convergence for the irreversible process; see Section \ref{sec_5.1pi}.

From  $\pi$-symmetric decomposition \eqref{FPr}, for the irreversible case, the dynamics can not be described only by a gradient flow and there is an additional Hamiltonian flow part. This observation is also given by \cite[Example 4.3]{Mielke_Renger_Peletier_2014}. Moreover, by defining a $L$-function in the large deviation principle, \cite{feng2006large, Mielke_Renger_Peletier_2014} established the relation between (generalized) gradient flow and the $L$-function in the large deviation principle; see also \cite{yg20, GLL21, gao2022thermodynamic}.

Designing a structure preserving numerical scheme is important and will be studied for schemes with generic $Q$-matrix structure  in   Section \ref{sec3} via a discrete $\pi$-symmetric decomposition.
Specifically, we will show both  ``upwind scheme'' \eqref{mp} and ``$\pi$-symmetric upwind scheme'' \eqref{mp-pi} are  structure preserving numerical scheme which enjoy the above $\pi$-symmetric decomposition,  a discrete dissipation law and  exponential ergodicity.

\subsection{$\pi$-symmetric upwind scheme for irreversible processes with an invariant measure}\label{sec_2.3_scheme2}

In this section, we design a finite volume scheme based on  $\pi$-symmetric decomposition \eqref{FPr} and based on a given steady state  information, i.e., the invariant measure $\pi$.  The expected numerical scheme, which defines a numerical $\pi_i^\8$, should recover the given steady state $\pi_i^\8=\pi_i.$ This kind of idea that preserves given steady state is known as ``well-balanced scheme.''
Recall the symmetric decomposition \eqref{FPr}
\begin{equation}
\begin{aligned}
\pt_t \rho =\nabla \cdot \bbs{D \nabla \rho - \vec{b} \rho}
 = \nabla \cdot \bbs{ D \pi \nabla \frac{\rho}{\pi}} +   \bbs{D\nabla \pi -   \pi \vec{b}} \cdot \nabla \frac{\rho}{\pi} 
 =: L^* \frac{\rho}{\pi} + T \frac{\rho}{\pi}.
\end{aligned}
\end{equation}
From stationary equation \eqref{FPequi}, the new drift velocity
\begin{equation}\label{nb}
\vec{u}:= D\nabla \pi - \pi \vec{b}, \quad \nabla \cdot \vec{u} =0.
\end{equation}

Then two common scenarios in applications are  (i) given the non-gradient form drift $\vec{b}$ and steady state $\pi$, we can compute the new drift velocity \eqref{nb}; (ii) given any incompressible velocity field $\vec{u}$ such that $\nabla \cdot \vec{u}=0$, based on $\pi$, then the original drift is given by
$
\vec{b}  = D \frac{\nabla \pi}{\pi} + \frac{\vec{u}}{\pi}.
$

Now using the symmetric decomposition \eqref{FPr} and   the given steady state  information $\pi$, we design the following upwind finite volume scheme and call it ``$\pi$-symmetric upwind scheme''.
For $i=1, \cdots, n$,
\begin{equation}\label{mp-pi}
\frac{\ud}{\ud t}\rho_i |C_i|=  \sum_{j\in VF(i)} |\Gamma_{ij}| \bbs{\frac{D(\pi_i+\pi_j) }{2|\my_j-\my_i|}\bbs{\frac{\rho_j}{\pi_j}-\frac{\rho_i}{\pi_i}}   + (\vec{u}\cdot \vec{n})_{ij}^- \frac{\rho_j}{\pi_j} -  (\vec{u}\cdot \vec{n})_{ij}^+ \frac{\rho_i}{\pi_i} },
\end{equation}
where $|C_i|$ is the volume element at cell $C_i$. 
Here $
(\vec{u} \cdot \vec{n})^+_{ij}, \, (\vec{u} \cdot \vec{n})^-_{ij}>0
$
as the positive and negative parts of $(\vec{u} \cdot \vec{n})_{ij}$ respectively.

Denote
\begin{equation}\label{Q_tn}
\begin{aligned}
Q^*_{ij}= \frac{|\Gamma_{ij}|}{|\pi_j||C_j|} \bbs{\frac{ D(\pi_i+\pi_j)  }{2|\my_j-\my_i|} + \bbs{\vec{u} \cdot \vec{n}}^-_{ij} }  \geq 0,\,\, j\neq i,\\ \quad  Q^*_{ii}= \sum_j \frac{|\Gamma_{ij}|}{|\pi_i||C_i|} \bbs{-\frac{ D(\pi_i+\pi_j)  }{2|\my_j-\my_i|} - \bbs{\vec{u} \cdot \vec{n}}^+_{ij} } 
\end{aligned}
\end{equation}
Then from \eqref{sys}
\begin{equation}\label{Q-p}
Q^*_{ij}=Q_{ji}=  \frac{|\Gamma_{ij}|}{|\pi_j||C_j|} \bbs{\frac{ D(\pi_i+\pi_j)  }{2|\my_j-\my_i|} + \bbs{\vec{u} \cdot \vec{n}}^-_{ij} }  \geq 0,\,\, j\neq i, \quad  Q_{ii} = - \sum_{j\neq i} Q_{ij},
\end{equation}
which is still a stochastic  $Q$-matrix that row sums zero.
Then \eqref{mp-pi} can be recast as
\begin{equation}
\frac{\ud}{\ud t}\rho_i |C_i| = \sum_j Q_{ij}^* \rho_j |C_j|. 
\end{equation}

\subsubsection{Structure preserving decomposition}
For the upwind part of \eqref{mp-pi}, notice
\begin{equation}
\begin{aligned}
&(\vec{u}\cdot \vec{n})^+_{ij}  \frac{\rho_j}{\pi_j} 
= (\vec{u}\cdot \vec{n})_{ij}   \frac{\rho_i}{\pi_i} +  (\vec{u}\cdot \vec{n})^-_{ij}   \frac{\rho_i}{\pi_i}.
\end{aligned}
\end{equation}
Then the flux $F_{ji}$ in \eqref{mp-pi} can be recast as
\begin{equation}\label{F_dec0}
F_{ji} =   |\Gamma_{ij}| \bbs{ \bbs{\frac{D(\pi_i+\pi_j) }{2|\my_j-\my_i|} + (\vec{u}\cdot \vec{n})_{ij}^-}\bbs{\frac{\rho_j}{\pi_j}-\frac{\rho_i}{\pi_i}}   -(\vec{u}\cdot \vec{n})_{ij}   \frac{\rho_i}{\pi_i}}.
\end{equation} 
Notice another anti-symmetric decomposition for the upwind parts
\begin{equation}
\begin{aligned}
&(\vec{u}\cdot \vec{n})_{ij}^- \frac{\rho_j}{\pi_j} -  (\vec{u}\cdot \vec{n})_{ij}^+ \frac{\rho_i}{\pi_i} 
= -\frac12  (\vec{u}\cdot \vec{n})_{ij} \bbs{\frac{\rho_j}{\pi_j} + \frac{\rho_i}{\pi_i}} + \frac12 | (\vec{u}\cdot \vec{n})_{ij}| \bbs{\frac{\rho_j}{\pi_j} - \frac{\rho_i}{\pi_i}}.
\end{aligned}
\end{equation}
Then another recast of flux $F_{ji}$ in \eqref{mp-pi} is
\begin{equation}\label{F_dec}
F_{ji} =   |\Gamma_{ij}| \bbs{ \bbs{\frac{D(\pi_i+\pi_j) }{2|\my_j-\my_i|} + \frac12 | (\vec{u}\cdot \vec{n})_{ij}|}\bbs{\frac{\rho_j}{\pi_j}-\frac{\rho_i}{\pi_i}}   -\frac12(\vec{u}\cdot \vec{n})_{ij}    \bbs{\frac{\rho_j}{\pi_j} + \frac{\rho_i}{\pi_i}}}.
\end{equation} 

Using \eqref{F_dec}, multiplying \eqref{mp-pi} by $\frac{\rho_i}{\pi_i}$ and taking summation w.r.t $i$, we have
\begin{equation}\label{L2decay1}
\begin{aligned}
 \frac{\ud}{\ud t} \frac{\rho_i^2}{\pi_i} |C_i| = -\sum_{i,j}  \frac{|\Gamma_{ij}|}{2}  \bbs{\frac{D(\pi_i+\pi_j) }{|\my_j-\my_i|} + | (\vec{u}\cdot \vec{n})_{ij}| }\bbs{\frac{\rho_j}{\pi_j}-\frac{\rho_i}{\pi_i}}^2 + I_e,\\
 I_e:= - \sum_{i,j} \frac{|\Gamma_{ij}|}{2} (\vec{u}\cdot \vec{n})_{ij} \bbs{\frac{\rho_j}{\pi_j} \frac{\rho_i}{\pi_i} + \frac{\rho_i^2}{\pi_i^2}}=- \sum_{i,j}\frac{|\Gamma_{ij}|}{2} (\vec{u}\cdot \vec{n})_{ij} \frac{\rho_i^2}{\pi_i^2}.
 \end{aligned}
\end{equation}
Notice in the continuous case, given $\pi$ and $\vec{b}$, the drift $\vec{u}$ in \eqref{nb} is divergence free. If we have same  divergence free condition in the discrete case,  then $I_e=0$ and we obtain the accurate convergence to steady state.  Thus we construct the discrete velocity field such that the incompressible condition  holds
\begin{equation}\label{div0}
\sum_j |\Gamma_{i,j}| (\vec{u} \cdot \vec{n})_{i,j} =0.
\end{equation}

There are some well-known schemes in computations for incompressible flows, Maxwell's equations and   magnetohydrodynamics \cite{weinan1996vorticity, liu2000high}. We adapt them in the 2D case as follows.
In 2D, there exists a stream function $\psi(x,y)$ such that $\vec{u}=(-\pt_y \psi, \pt_x \psi)$. For each 2D cell $C_i$, we set a counterclockwise orientation for its face $\Gamma_{ij}, \, j\in VF(i)$. Then at each face $\Gamma_{ij}$, denote the starting point as $\alpha_{ij}$ and the ending point as $\beta_{ij}$. Then the discrete incompressible velocity field is defined as 
\begin{equation}\label{streamU}
(\vec{u} \cdot \vec{n})_{ij}: = \pt_\tau \psi \approx \frac{\psi(\beta_{ij})-\psi(\alpha_{ij})}{|\beta_{ij}-\alpha_{ij}|} = \frac{\psi(\beta_{ij})-\psi(\alpha_{ij})}{|\Gamma_{ij}|},
\end{equation}
which automatically satisfies \eqref{div0}.
In 3D case, we need use a vector potential instead of stream function while in higher dimension there are more freedoms to construct the incompressible velocity.

Thanks to  \eqref{div0} and $I_e=0$, \eqref{L2decay1} becomes the discrete energy dissipation law
\begin{equation}
\frac{\ud}{\ud t} \frac{\rho_i^2}{\pi_i} |C_i| =-\frac12 \sum_{i,j} \alpha_{i,j} \bbs{\frac{\rho_j}{\pi_j}-\frac{\rho_i}{\pi_i}}^2 \leq 0
\end{equation}
with a symmetric coefficient $\alpha_{ij}= |\Gamma_{ij}| \bbs{\frac{D(\pi_i+\pi_j) }{|\my_j-\my_i|} + | (\vec{u}\cdot \vec{n})_{ij}| }$; see Proposition \ref{prop_energy} for same result with a general $Q$-matrix. Then  with the truncation error estimate, we will give the convergence analysis  in Theorem \ref{thm_con} for the numerical solution in terms of the $\chi^2$-divergence.

After choosing $\vec{u}$ satisfying \eqref{div0}, from \eqref{F_dec0},   $\pi$-symmetric upwind scheme \eqref{mp-pi} is equivalent to
\begin{equation}\label{F_num}
\frac{\ud}{\ud t} \rho_i |C_i| = \sum_{j\in VF(i)} F_{ji}, \quad  F_{ji} =   |\Gamma_{ij}|   \bbs{\frac{D(\pi_i+\pi_j) }{2|\my_j-\my_i|} + (\vec{u}\cdot \vec{n})_{ij}^-}\bbs{\frac{\rho_j}{\pi_j}-\frac{\rho_i}{\pi_i}}. 
\end{equation}
Thus although the original Fokker-Planck \eqref{FP-N} is irreversible, but the discrete  flux $F_{ji}$ can be chosen such that the discrete steady  flux
\begin{equation}\label{dis_db}
F^\pi_{ji} =  |\Gamma_{ij}|   \bbs{\frac{D(\pi_i+\pi_j) }{2|\my_j-\my_i|} + (\vec{u}\cdot \vec{n})_{ij}^-}\bbs{\frac{\pi_j}{\pi_j}-\frac{\pi_i}{\pi_i}} \equiv 0, \quad \forall i,j.
\end{equation}
This implies the scheme is well balanced, i.e., numerical steady state $\pi_i^\8=\pi_i.$
\begin{rem}
Although we can choose the discrete flux satisfies \eqref{dis_db},   we remark this does not contradict with the equivalent irreversible condition, i.e., the flux $F^\pi=0$ pointwisely   for continuous Fokker-Planck equation; c.f. \cite{qian2002thermodynamics, GLL21}. Indeed, because of different decomposition of flux and \eqref{div0}, in the numerical schemes, one can either use the pointwise flux $F_{ji}$ in \eqref{F_num} or use original $F_{ji}$ in \eqref{F_dec0}; however, only the former one satisfies \eqref{dis_db}.
\end{rem}

{ \begin{rem}
Although we design numerical schemes under the assumption $\nn$ is a closed manifold, for manifold with boundaries, we point out the construction of schemes and the analysis for no-flux boundary condition are identically same because $j\in VF(i)$ in both schemes \eqref{mp} and \eqref{mp-pi} already accommodate the no-flux boundary conditions for $i$ adjacent to the boundary. In the numerical examples in Section \ref{sec5}, for 2D structured grids,   schemes \eqref{mp} and \eqref{mp-pi} naturally equip with no-flux boundary conditions.
\end{rem}
}

\section{Structure preserving,   ergodicity and stability}\label{sec3}
In this section, for numerical schemes with an abstract generic $Q$-matrix structure, we present a structure preserving reformulation, which is the discrete counterpart of  $\pi$-symmetric decomposition \eqref{FPr}. Then we use it to prove   stability and ergodicity for schemes with a generic $Q$-matrix structure. These results apply to both \eqref{mp} and \eqref{mp-pi}.
\subsection{Structure preserving for the $\pi$-symmetric decomposition in discrete case} In this section, we leverage the numerical steady state $\pi^\8_i$ to recover the $\pi$-symmetric decomposition for  numerical schemes with a generic $Q$-matrix structure, for instance \eqref{mp}, and derive the discrete energy dissipation law. { We will show that all the structures for continuous equation are also preserved for our numerical scheme including (i) positivity preserving; (ii) total mass preserving; (iii) well-balance property; (iv) $\ell^1$-contraction; (v) the discrete $\pi$-symmetric decomposition \eqref{decom};  and (vi) energy dissipation law. } 

{ Step 1. As we already observed in \eqref{Q-p}, $Q$ defined in \eqref{Q-p} is a stochastic $Q$-matrix. Thus (i) positivity preserving and  (ii) the total mass preserving are straightforward. The well-balanced property directly comes from \eqref{dis_db}. For (iv), one can see for any $\rho_i$ and $\tilde{\rho}_i$ and $e_i= \rho_i-\tilde{\rho}_i$
\begin{equation}
\frac{\ud }{\ud t} \sum_i |e_i| = \sum_{i,j} Q_{ji} \text{sgn}(e_i) e_j \leq \sum_{i,j} Q_{ji} |e_j| =0, 
\end{equation}
due to $\sum_j Q_{ij}=0$. Thus (iv) $\ell^1$-contraction holds.
}

Step 2. Before  doing the symmetric decomposition, we first clarify the existence and uniqueness of a positive numerical steady state $\pi^\8$.  { Since our scheme is well-balanced \eqref{dis_db}, the numerical steady state $\pi^\8=\pi$ exists. Next, we show the uniqueness of $\pi^\8$ using the Perron-Frobenius
theorem.}  Let $Q$ be the $Q$-matrix defined in \eqref{Qm}. Notice the Vonoroi tessellation implies the directed graph associated to $Q$ is connected.  Thus $Q$ is irreducible. Taking a constant $a>\max_i (-Q_{ii})>0$, we construct a transition probability matrix  
\begin{equation}\label{KM}
K:=\frac{Q}{a}+I
\end{equation}
 such that
\begin{equation}
K_{ij} = \frac{Q_{ij}}{a}\geq 0, \,\,\, j\neq i; \quad K_{ii} = 1+ \frac{Q_{ii}}{a} > 0.
\end{equation}
Since $K_{ii}>0$, the resulted Markov chain with transition probability matrix $K$ is aperiodic. 
It is easy to see $\sum_j K_{ij} = 1$, so we know $1$ is an eigenvalue to $K$ with a right eigenvector $e:=(1,1, \cdots, 1)^T$, i.e. $Ke=e.$ Then by the Perron-Frobenius
theorem for $K$, we know 
\begin{itemize}
\item $\mu_1=1$ is the principle eigenvalue of $K$ and all the other eigenvalues $|\mu_i|<1$, $i=2, \cdots, n$;
\item  there is  a positive eigenvector, denoted as $\pi_\8^T$, being the left eigenvector corresponding to $1$, i.e. $\pi_\8^T K = \pi_\8^T$. 
\end{itemize}
Here and in the following context, the vector $\rho^T$ is short for $\rho^T:=(\rho_1|C_1|, \cdots, \rho_n|C_n|)$. 
Thus we know the positive vector $\pi^\8$  is the unique numerical  stationary solution to 
\begin{equation}\label{equiC}
\sum_j Q^*_{ij}\pi^\8_j|C_j| =0.
\end{equation} 

One can  interpret the right-hand-side of \eqref{rhoDeq} as the  flux gained and lost at    site $i$. Denote 
 \begin{equation}
 F_{ji} : =Q_{ji} \rho_j |C_j| - Q_{ij} \rho_i |C_i|,
 \end{equation}
 then  the dynamic solution satisfies
\begin{equation}\label{dynF}
\pt_t(\rho_i|C_i|)=\sum_{j\in VF(i)} F_{ji}, \quad i=1,2, \cdots, n.
\end{equation}
Denote the steady flux as
\begin{equation}\label{equiF}
F^\pi_{ji} : =Q_{ji} \pi^\8_j |C_j| - Q_{ij} \pi^\8_i |C_i|, \quad \sum_{j\in VF(i)} F_{ji}^\pi = 0 
\end{equation}
then we have  the symmetric property
\begin{equation}
F_{ij} = -F_{ji}, \quad F^\pi_{ij} = - F^\pi_{ji}.
\end{equation}

Next, leveraging the positive numerical steady state $\pi^\8$, we reformulate the flux $F_{ji}$ using a discrete $\pi$-symmetric decomposition. Using an elementary identity, we have 
\begin{equation}\label{decom}
\begin{aligned}
F_{ji}=& Q_{ji}\rho_j |C_j|- Q_{ij} \rho_i|C_i|\\
=&\frac{Q_{ji}\pi^\8_j |C_j| + Q_{ij} \pi^\8_i |C_i|}{2} \bbs{\frac{\rho_j}{\pi^\8_j} - \frac{\rho_i}{\pi^\8_i}} + \frac{Q_{ji} \pi^\8_j |C_j| - Q_{ij}\pi^\8_i |C_i|}{2} \bbs{\frac{\rho_j}{\pi^\8_j} + \frac{\rho_i}{\pi^\8_i}}\\
=& \frac12\alpha_{ij} \bbs{\frac{\rho_j}{\pi^\8_j} - \frac{\rho_i}{\pi^\8_i}}  + \frac12 F^\pi_{ji} \bbs{\frac{\rho_j}{\pi^\8_j} + \frac{\rho_i}{\pi^\8_i}}=:L_{ij}+T_{ij},
\end{aligned}
\end{equation}
where $\alpha_{ij} :=Q_{ji}\pi^\8_j |C_j| + Q_{ij} \pi^\8_i |C_i| =\alpha_{ji}>0$ is the symmetric positive coefficients. For the reversible case, $\alpha_{ij}$ is known as Onsager matrix. 

From the row sums zero property for $Q$-matrix, the natural tangent space of the probability space will be $T_P:=\{u\in \bR^n; \sum_i u_i = 0 \text{ and } u_i\geq 0 \text{ if } \rho_i=0\}$. Then from the antisymmetric property, we have
\begin{equation}
\sum_{ij} \alpha_{ij} \bbs{\frac{\rho_j}{\pi^\8_j} - \frac{\rho_i}{\pi^\8_i}} =0, \quad \sum_{ij}F^\pi_{ji} \bbs{\frac{\rho_j}{\pi^\8_j} + \frac{\rho_i}{\pi^\8_i}}=0.
\end{equation}
{ Thus (v) discrete $\pi$-symmetric decomposition \eqref{decom} holds.} It ensures each component in the decomposition still belongs to the tangent space $T_P.$

From stationary equation \eqref{equiF} and \eqref{sys},  we have
\begin{equation}\label{sy2}
\begin{aligned}
\sum_j T_{ij}&= \sum_j \frac{F^\pi_{ji}}{2} \frac{\rho_j}{\pi^\8_j}, \quad \sum_{i,j} T_{ij}  \frac{\rho_i}{\pi^\8_i} = \sum_{i,j} F_{ji}^\pi \frac{\rho_j}{\pi^\8_j} \frac{\rho_i}{\pi^\8_i}=0.
\end{aligned}
\end{equation}
Therefore the second part $T_{ij}$ in the decomposition has no contribution in the energy dissipation law. Indeed, if the reversible condition (detailed balance) holds, i.e., $Q_{ji} \pi^\8_j |C_j| = Q_{ij}\pi^\8_i |C_i|$, then $T_{ij}=0$ and we obtain exactly same energy dissipation law as the irreversible case. 
However, for the irreversible case, i.e., $Q_{ji} \pi^\8_j |C_j| \neq Q_{ij}\pi^\8_i |C_i|$, \textsc{Prigogine} characterized the energy transport when the dynamical system reaches the ``stationary non-equilibrium state $\pi$''  via the concept of entropy production rate  \cite{prigogine1968introduction}. Irreversible process is important for computing  transition paths in chemical reaction \cite{GLLL21} and constructing the global energy landscape \cite{gao2022selection}.   Importantly, using  Markov chain models in  biochemical reactions, \textsc{Hill} gives the following specific formula for the entropy production rate and call it ``the total rate of free energy dissipation''    \cite[eq. (9.20)]{hill2005free}
\begin{equation}\label{entropyP}
\ud S:= \frac{kT}2 \sum_{i,j} \bbs{Q_{ij}\pi^\8_i|C_i| - Q_{ji}\pi^\8_j|C_j|} \log \frac{Q_{ij} \pi^\8_i|C_i| }{Q_{ji}\pi^\8_j|C_j|} >0.
\end{equation}
Apparently, $\ud S=0$ if and only if the detailed balance condition holds (reversible case). See Appendix \ref{app:Hill} for detailed derivations by \textsc{Hill}.

In the following convergence analysis, since $D$ is positive definite, so $\vec{n}_{ij} \cdot D\vec{n}_{ij}\geq c>0$. Thus for notation simplicity, we now assume $D_{ij}=D\delta_{ij}, \, D>0$ is a scalar constant.
 From the definition of $Q$ for \eqref{mp}, 
$$F_{ji} = |\Gamma_{ij}| \bbs{\frac{D \bbs{\rho_j-\rho_i}}{|\my_j-\my_i|}  + (\vec{b}\cdot \vec{n})_{ij}^- \rho_j -  (\vec{b}\cdot \vec{n})_{ij}^+ \rho_i }.$$
Respectively,  the steady flux at site $i$  is
\begin{equation}
\begin{aligned}
F^\pi_{ji}= |\Gamma_{ij}| \bbs{\frac{D \bbs{\pi^\8_j-\pi^\8_i}}{|\my_j-\my_i|}  + (\vec{b}\cdot \vec{n})_{ij}^- \pi^\8_j -  (\vec{b}\cdot \vec{n})_{ij}^+ \pi^\8_i }.
\end{aligned}
\end{equation}

We summarize (vi) discrete energy dissipation law below.
\begin{prop}\label{prop_energy}
Let $\rho_i$ be the solution to \eqref{dynF} and $\pi^\8_i$ be the solution to \eqref{equiF}. 
\begin{equation}\label{Ddecay2}
\frac{\ud}{\ud t} \sum_i \frac{\rho_i^2}{\pi^\8_i}|C_i| = -\frac12 \sum_{i,j} \alpha_{ij} \bbs{\frac{\rho_j}{\pi^\8_j} - \frac{\rho_i}{\pi^\8_i}}^2 \leq -\frac{D}{2} \sum_{i,j}  \frac{|\Gamma_{ij}|(\pi^\8_i+\pi^\8_j)}{|\my_i-\my_j|} \bbs{\frac{\rho_j}{\pi^\8_j} - \frac{\rho_i}{\pi^\8_i}}^2 \leq 0,
\end{equation}
where $\alpha_{ij}=Q_{ji}\pi^\8_j |C_j| + Q_{ij} \pi^\8_i |C_i|=\alpha_{ji}>0$.
Moreover, we have also 
\begin{equation}\label{Ddecay3}
\frac{\ud}{\ud t} \sum_i \frac{(\rho_i-\pi^\8_i)^2}{\pi^\8_i}|C_i| = -\frac12 \sum_{i,j} \alpha_{ij} \bbs{\frac{\rho_j}{\pi^\8_j} - \frac{\rho_i}{\pi^\8_i}}^2 \leq -\frac{D}{2} \sum_{i,j}  \frac{|\Gamma_{ij}|(\pi^\8_i+\pi^\8_j)}{|\my_i-\my_j|} \bbs{\frac{\rho_j}{\pi^\8_j} - \frac{\rho_i}{\pi^\8_i}}^2 \leq 0,
\end{equation}
\end{prop}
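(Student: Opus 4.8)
The plan is to test the dynamical equation \eqref{dynF} against $\frac{\rho_i}{\pi^\8_i}$ and exploit the discrete $\pi$-symmetric decomposition \eqref{decom}, which has already isolated the dissipative part $L_{ij}$ from the conservative part $T_{ij}$. Since $\pi^\8_i$ and $|C_i|$ are time-independent, differentiating gives $\frac{\ud}{\ud t}\sum_i \frac{\rho_i^2}{\pi^\8_i}|C_i| = 2\sum_i \frac{\rho_i}{\pi^\8_i}\pt_t(\rho_i|C_i|) = 2\sum_{i,j} F_{ji}\frac{\rho_i}{\pi^\8_i}$ after inserting \eqref{dynF}. Substituting $F_{ji}=L_{ij}+T_{ij}$ splits the right-hand side into a dissipative and a conservative contribution, and the whole of \eqref{Ddecay2} then follows from handling these two pieces separately.

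First I would dispose of the conservative term: \eqref{sy2} already records $\sum_{i,j} T_{ij}\frac{\rho_i}{\pi^\8_i} = \sum_{i,j}F^\pi_{ji}\frac{\rho_j}{\pi^\8_j}\frac{\rho_i}{\pi^\8_i}=0$, so the Hamiltonian part drops out entirely. The surviving dissipative term is $2\sum_{i,j}L_{ij}\frac{\rho_i}{\pi^\8_i} = \sum_{i,j}\alpha_{ij}\bbs{\frac{\rho_j}{\pi^\8_j}-\frac{\rho_i}{\pi^\8_i}}\frac{\rho_i}{\pi^\8_i}$. Using the symmetry $\alpha_{ij}=\alpha_{ji}$, I symmetrize by relabeling $i\leftrightarrow j$ and averaging the two expressions; this turns the bare factor $\frac{\rho_i}{\pi^\8_i}$ into the full square and produces exactly $-\frac12\sum_{i,j}\alpha_{ij}\bbs{\frac{\rho_j}{\pi^\8_j}-\frac{\rho_i}{\pi^\8_i}}^2$, giving the equality in \eqref{Ddecay2}.

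For the stated inequality I would return to the explicit $Q$-matrix of scheme \eqref{mp}. Writing $\alpha_{ij}=Q_{ji}\pi^\8_j|C_j|+Q_{ij}\pi^\8_i|C_i|$ and inserting \eqref{Qm} (with the scalar reduction $\vec{n}_{ij}\cdot D\vec{n}_{ij}=D$ and the identity \eqref{sys}), the diffusion contributions combine to $|\Gamma_{ij}|\frac{D(\pi^\8_i+\pi^\8_j)}{|\my_j-\my_i|}$, while the two upwind contributions $|\Gamma_{ij}|\bbs{(\vec{b}\cdot\vec{n})^-_{ij}\pi^\8_j+(\vec{b}\cdot\vec{n})^+_{ij}\pi^\8_i}$ are nonnegative because $\pi^\8>0$. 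Dropping these nonnegative terms gives $\alpha_{ij}\geq |\Gamma_{ij}|\frac{D(\pi^\8_i+\pi^\8_j)}{|\my_j-\my_i|}$, and since the quadratic weights are nonnegative this converts the equality into the middle inequality of \eqref{Ddecay2}; nonnegativity of every summand then gives the final $\leq 0$.

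Finally, \eqref{Ddecay3} follows from \eqref{Ddecay2} with essentially no extra work by expanding $\frac{(\rho_i-\pi^\8_i)^2}{\pi^\8_i}=\frac{\rho_i^2}{\pi^\8_i}-2\rho_i+\pi^\8_i$. Summing against $|C_i|$, the cross term $-2\sum_i\rho_i|C_i|$ is constant in time by the total-mass-preserving property listed after \eqref{Qm}, and $\sum_i\pi^\8_i|C_i|$ is fixed, so the time derivatives of the two functionals coincide and \eqref{Ddecay3} inherits both inequalities. I do not anticipate a genuine obstacle: the only points needing care are the symmetrization step and the bookkeeping that the antisymmetric flux $T_{ij}$ contributes nothing, and both are already prepared by \eqref{decom} and \eqref{sy2}, so the proof reduces to assembling these ingredients.
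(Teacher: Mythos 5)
Your proposal is correct and follows essentially the same route as the paper's proof: testing \eqref{dynF} against $\rho_i/\pi^\8_i$, invoking the decomposition \eqref{decom} with \eqref{sy2} to kill the antisymmetric part $T_{ij}$, symmetrizing the $L_{ij}$ contribution into the Dirichlet form, bounding $\alpha_{ij}$ below via \eqref{al_error} by dropping the nonnegative upwind terms, and using total-mass conservation to transfer \eqref{Ddecay2} to \eqref{Ddecay3}. The only cosmetic difference is in the last step: you expand $\frac{(\rho_i-\pi^\8_i)^2}{\pi^\8_i}=\frac{\rho_i^2}{\pi^\8_i}-2\rho_i+\pi^\8_i$ and note the extra terms are constants, while the paper writes $\sum_i\frac{\rho_i}{\pi^\8_i}\dot\rho_i|C_i|=\sum_i\bbs{\frac{\rho_i}{\pi^\8_i}-1}\dot\rho_i|C_i|$ directly — these are the same observation.
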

We remark the $\chi^2$-divergence energy identity \eqref{Ddecay2} always holds with/without detailed balance condition. Usually the dissipation term $\frac12 \sum_{i,j} \alpha_{ij} \bbs{\frac{\rho_j}{\pi^\8_j} - \frac{\rho_i}{\pi^\8_i}}^2$ is called discrete Dirichlet form.
As a consequence, the symmetric coefficients for ``upwind scheme'' is
\begin{equation}\label{al}
\alpha_{ij} :=  |\Gamma_{ij}| \bbs{\frac{D}{|\my_j-\my_i|} \bbs{\pi^\8_i+\pi^\8_j} +  (\vec{b}\cdot \vec{n})_{ij}^+\pi^\8_i + (\vec{b}\cdot \vec{n})_{ji}^+\pi^\8_j  } =\alpha_{ji},
\end{equation}
 while for ``$\pi$-symmetric upwind scheme'' is
\begin{equation}
\alpha_{ij}= |\Gamma_{ij}| \bbs{\frac{D(\pi_i+\pi_j) }{|\my_j-\my_i|} + | (\vec{u}\cdot \vec{n})_{ij}| }=\alpha_{ji}.
\end{equation}
\begin{proof}

Using the definition of $Q$, direct calculations show $\alpha_{ij}=Q_{ji}\pi^\8_j |C_j| + Q_{ij} \pi^\8_i |C_i| $  is given by \eqref{al}.
From the decomposition \eqref{decom} and \eqref{sy2},  the discrete energy dissipation law becomes
\begin{equation}\label{Ddecay1}
\begin{aligned}
\sum_i \frac{\rho_i}{\pi^\8_i} \dot{\rho_i}|C_i| = \sum_i \bbs{\frac{\rho_i}{\pi^\8_i} \sum_j F_{ji}}
=-\frac{1}{4}\sum_{i,j} \alpha_{ij} \bbs{\frac{\rho_j}{\pi^\8_j} - \frac{\rho_i}{\pi^\8_i}}^2\leq 0.       
\end{aligned}
\end{equation}
Since $\pi^\8$ is positive, we have the estimate 
 \begin{equation}\label{al_error}
 \alpha_{ij} >  D \frac{|\Gamma_{ij}| (\pi^\8_i+\pi^\8_j)}{|\my_j-\my_i|}.
 \end{equation}
Then \eqref{Ddecay1} implies \eqref{Ddecay2}.
Moreover, from conservation of total mass, 
\begin{equation}
\sum_i \frac{\rho_i}{\pi^\8_i} \dot{\rho_i}|C_i|=\sum_i\bbs{ \frac{\rho_i}{\pi^\8_i}-1} \dot{\rho_i}|C_i| =\frac12 \frac{\ud}{\ud t} \sum_i \frac{(\rho_i-\pi^\8_i)^2}{\pi^\8_i}|C_i|,
\end{equation}
so we also conclude \eqref{Ddecay3}.
\end{proof}

We point out that  the stability  analysis here and in the remaining section   also works for the Fokker-Planck equation on non-compact manifold as long as a  positive solution to $Q^* \pi^\8=0$ exists. However, there is no general  Perron-Frobenius theorem to ensure  the existence of \eqref{equiC} for a $Q$-matrix on a countable states.

\begin{rem}
Compared with the $\pi$-symmetric decomposition in the continuous case \eqref{FPr}, the gradient flow part in \eqref{decom} 
\begin{equation}
L_{ij} = \frac{D|\Gamma_{ij}|}{|\my_j-\my_i|}  \frac{\pi^\8_i+\pi^\8_j}{2}\bbs{\frac{\rho_j}{\pi^\8_j} - \frac{\rho_i}{\pi^\8_i}}  \bbs{ 1+   |\my_j-\my_i| \frac{{(\vec{b}\cdot \vec{n})_{ij}^+\pi^\8_i + (\vec{b}\cdot \vec{n})_{ji}^+\pi^\8_j } }{D(\pi^\8_i+ \pi^\8_j)}}
\end{equation}  has an additional numerical dissipation terms $O\bbs{|\my_j-\my_i|}$. This numerical dissipation comes from the upwind discretization. Indeed, without this additional numerical dissipation term,   $\pt_t \rho_i|C_i| = \sum_{j}L_{ij} $ is exactly the  finite volume scheme designed in \cite{GLW20} for reversible case.
\end{rem}

 We remark that as long as we have $Q$-matrix structure, the $\phi$-divergence dissipation holds. Indeed, let $\phi$ be a convex function and $E=\sum_i \phi\bbs{\frac{\rho_i}{\pi_i}}\pi_i $. Then using $\sum_j Q_{ji}\pi_j = 0$ and $\sum_j Q_{ij}=0$ for any $i$, we have
\begin{equation}
\begin{aligned}
\frac{\ud}{\ud t} \sum_i \rho_i \phi'\bbs {\frac{\rho_i}{\pi _i}} =& \sum_{i,j}  Q_{ji} \rho_j \phi'\bbs{\frac{\rho_i}{\pi_i}} = \sum_{ i, j, i\neq j} Q_{ji} \pi_j \frac{\rho_j}{\pi_j} \bbs{\phi' \bbs{\frac{\rho_i}{\pi_i}}-\phi'\bbs{\frac{\rho_j}{\pi_j}}}\\
 =&\sum_{i,j, j\neq i} Q_{ji} \pi_j \frac{\rho_j}{\pi_j} \bbs{\phi' \bbs{\frac{\rho_i}{\pi_i}}-\phi'\bbs{\frac{\rho_j}{\pi_j}}} -  \sum_{i,j, j\neq i} Q_{ji} \pi_j \bbs{\psi\bbs{\frac{\rho_i}{\pi_i}} -\psi\bbs{\frac{\rho_j}{\pi_j}} }\\
  =&\sum_{i,j, j\neq i} Q_{ji} \pi_j \bbs{ \frac{\rho_j}{\pi_j} \bbs{\phi' \bbs{\frac{\rho_i}{\pi_i}}-\phi'\bbs{\frac{\rho_j}{\pi_j}}} -  \bbs{\psi\bbs{\frac{\rho_i}{\pi_i}} -\psi\bbs{\frac{\rho_j}{\pi_j}} } },
\end{aligned}
\end{equation}
where $\psi(x)$ is any function to be chosen later. Denote $y=\frac{\rho_j}{\pi_j}$ and $x=\frac{\rho_i}{\pi_i}$. To compute the Bregman divergence $D_\phi(y,x)$ associated with $\phi$ for points $y,x$,
take $\psi(x) = x\phi'(x)-\phi(x)$, then $\psi'(x)=x\phi''(x)$
and
\begin{equation}
y [\phi'(x) - \phi'(y)] - [\psi(x) - \psi(y)] = (y-x)\phi'(x) +\phi(x) - \phi(y)=: - D_\phi(y,x)
\end{equation}
Using the integral form of the reminder in Taylor expansion, 
\begin{equation}
 D_\phi(y,x)= (y-x)^2\int_0^1 (1-\theta)\phi''(x+\theta(y-x))\ud \theta \geq 0.
\end{equation}
 Then the 
dissipation relation becomes
\begin{equation}
\frac{\ud E}{\ud t}= -\sum_{i,j, j\neq i} Q_{ji} \pi_j D_\phi\bbs{ \frac{\rho_j}{\pi_j} , \frac{\rho_i}{\pi_i}} \leq 0.
\end{equation}

%
%

\subsection{Ergodicity}

Let $Q$ be the $Q$-matrix defined in \eqref{Qm}. Recall the  nonnegative irreducible stochastic matrix  $K$ defined in \eqref{KM}.
Recall the right eigenvector $e:=(1,1, \cdots, 1)^T$, i.e. $Ke=e$ corresponding to $1$ and 
the left eigenvector, i.e. $\pi^T K = \pi^T$  due to the Perron-Frobenius
theorem for $K$.
We have the Jordan decomposition for $K$
\begin{equation}\label{jordan}
K = S \left(\begin{array}{cc}
1 & 0 \\ 
0 & J
\end{array}  \right) S^{-1} = e \pi^T + S \left(\begin{array}{cc}
0 & 0 \\ 
0 & J
\end{array}  \right) S^{-1} 
\end{equation} 
where $J$ consists of Jordan blocks with $|\mu_i|<1$, $i=2, \cdots, n$  and we used $S_{i1}=e_i$, $S^{-1}_{1j}=\pi_j$.
Here and in the following context, the vector $\rho^T$ is short for $\rho^T:=(\rho_1|C_1|, \cdots, \rho_n|C_n|)$. 

\begin{lem}\label{lem_longtime}
Let $Q$ be the $Q$-matrix defined in \eqref{Qm} and $\rho_t$ be the solution to \eqref{rhoDeq}. Given  initial data $\rho_0$,   we have
\begin{equation}\label{longtime}
\|\rho_t - \pi\|_{\ell^1} \leq c e^{ \frac{at}{2}(|\mu_2|-1)} \|\rho_0 \|_{\ell^1},
\end{equation}
where $|\mu_2|<1$ is the second  eigenvalue of $Q$ and $a>0$ is the constant in \eqref{KM}.
\end{lem}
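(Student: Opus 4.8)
The plan is to solve \eqref{rhoDeq} explicitly as a matrix exponential and read off the decay rate from the spectrum of $K$. Writing $v_i(t):=\rho_i(t)|C_i|$, the scheme \eqref{rhoDeq} becomes the linear ODE $\dot v = Q^* v$, whose solution is $v(t)=e^{tQ^*}v(0)$. From \eqref{KM} we have $Q=a(K-I)$, hence $Q^*=Q^T=a(K^T-I)$, and therefore
\begin{equation*}
v(t)=e^{-at}\,e^{atK^T}v(0)=e^{-at}\bbs{e^{atK}}^T v(0).
\end{equation*}
Thus everything reduces to understanding $e^{atK}$, and the prefactor $e^{-at}$ is exactly what will be balanced against the growth of $e^{atK}$ coming from the principal eigenvalue $\mu_1=1$.

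Next I would isolate the principal direction using the Jordan decomposition \eqref{jordan}. Since $S_{i1}=e_i$ and $S^{-1}_{1j}=\pi_j$, the spectral projector onto the principal eigenspace is the rank-one matrix $e\pi^T$, and
\begin{equation*}
e^{atK}=e^{at}\,e\pi^T+S\left(\begin{array}{cc} 0 & 0 \\ 0 & e^{atJ}\end{array}\right)S^{-1},
\end{equation*}
because $e\pi^T$ and the Jordan part are the complementary spectral projections. Transposing and substituting into the formula for $v(t)$, the principal term contributes $e^{-at}e^{at}\pi\,(e^Tv(0))$; by the total-mass-preserving property of the $Q$-process we may assume $e^Tv(0)=\sum_i\rho_i(0)|C_i|=1$, so this term is exactly $\pi$. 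Hence
\begin{equation*}
v(t)-\pi=e^{-at}\,(S^{-1})^T\left(\begin{array}{cc} 0 & 0 \\ 0 & e^{atJ^T}\end{array}\right)S^{T}v(0),
\end{equation*}
and it remains only to estimate the block $e^{atJ}$.

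The key estimate is that the spectral radius of $J$ equals $|\mu_2|=\max_{2\le i\le n}|\mu_i|<1$, the largest modulus among the non-principal eigenvalues. Splitting each Jordan block as $\mu_i I+N$ with $N$ nilpotent gives a bound $\norm{e^{atJ}}\le C\,(1+(at)^{p})\,e^{at|\mu_2|}$, where $p$ is one less than the largest block size and we used $\mathrm{Re}\,\mu_i\le|\mu_i|\le|\mu_2|$; absorbing the polynomial factor yields $\norm{e^{atJ}}\le C_\eps\,e^{at(|\mu_2|+\eps)}$ for every $\eps>0$. Combining with the displayed identity and using that $\norm{S}\norm{S^{-1}}<\8$ together with the equivalence of norms on $\bR^n$,
\begin{equation*}
\norm{\rho_t-\pi}_{\ell^1}\le c_\eps\,e^{-at}\,e^{at(|\mu_2|+\eps)}\norm{\rho_0}_{\ell^1}=c_\eps\,e^{at(|\mu_2|-1+\eps)}\norm{\rho_0}_{\ell^1}.
\end{equation*}
Choosing $\eps\le\tfrac12(1-|\mu_2|)$ makes $|\mu_2|-1+\eps\le\tfrac12(|\mu_2|-1)$, which gives exactly \eqref{longtime}; the factor $\tfrac12$ in the statement is precisely the slack that absorbs the $\eps$ and the polynomial Jordan corrections. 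I expect the main obstacle to be this last step: if $K$ were diagonalizable the bound would follow at once with clean rate $e^{at(|\mu_2|-1)}$, but the possible non-diagonalizability (nontrivial Jordan blocks) and the complex non-principal eigenvalues force the polynomial prefactors and the passage through $|\mu_i|$, which is where the $\tfrac12$ cushion is needed.
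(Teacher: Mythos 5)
Your proposal is correct and follows essentially the same route as the paper: both use $Q=a(K-I)$, the Jordan decomposition \eqref{jordan} with the rank-one principal projector $e\pi^T$ (and the implicit normalization $\sum_i\rho_i(0)|C_i|=1$), a block-wise splitting $\mu_\ell I+N$ with the nilpotent part giving a polynomial factor, and absorption of that polynomial into half the spectral gap to get the rate $e^{\frac{at}{2}(|\mu_2|-1)}$. The only differences are cosmetic (you work with column vectors and $e^{tQ^*}=e^{-at}e^{atK^T}$, the paper with row vectors $\rho_0^Te^{Qt}$, and your $\eps$-absorption is exactly the paper's $t^{n-2}e^{at(|\mu_2|-1)}\leq ce^{at(|\mu_2|-1)/2}$).
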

\begin{proof}
From the construction for $K$ and the Jordan decomposition in \eqref{jordan}, we know
\begin{equation}
Q = a(K-I) =  S \left(\begin{array}{cc}
0 & 0 \\ 
0 & a(J-I)
\end{array}  \right) S^{-1}.
\end{equation}
Thus 
\begin{equation}
e^{Qt} = e \pi^T + S \left(\begin{array}{cc}
0 & 0 \\ 
0 & e^{a(J-I)t}
\end{array}  \right) S^{-1}.
\end{equation}
Then for the solution $\rho_t$ to \eqref{rhoDeq}, we have
\begin{equation}\label{Drho_t}
\rho_t^T - \pi^T = \rho_0^T e^{Qt} - \pi^T =  \rho_0^T S \left(\begin{array}{cc}
0 & 0 \\ 
0 & e^{a(J-I)t}
\end{array}  \right) S^{-1}.
\end{equation}
Notice  $J$ consists of Jordan blocks with $|\mu_i|<1$, $i=2, \cdots, n$, and each Jordan block  $\ell$ can be written as the sum of $\mu_\ell I$ and a nilpotent matrix $N$. Thus   we have $e^{at(J_\ell-I)} = e^{at(\mu_\ell -1)}e^{atN}$ and $\|e^{atN}\|_{\ell^1}\leq ct^{n-2}$.   Therefore, combining  \eqref{Drho_t} with
$$\|e^{a(J-I)t}\|_{\ell^1}\leq c t^{n-2} e^{at(|\mu_2|-1)}\leq c e^{\frac{at(|\mu_2|-1)}{2} },$$
we conclude \eqref{longtime}.
\end{proof}

Another proof for the ergodicity is using the following discrete mean Poincare's inequality \cite[Lemma 10.2]{eymard2000finite}. As a consequence, the exponential decay rate will not depend on the data size $n$. 
\begin{lem}[\cite{eymard2000finite}]\label{lem_poin}
Assume $\sum_i u_i \pi_i |C_i| =0$, then we have the discrete mean Poincare inequality
\begin{equation}
    \sum_i u_i^2 \pi_i |C_i| \leq c  \sum_{i,j}  \frac{|\Gamma_{ij}|(\pi_i+\pi_j)}{|\my_i-\my_j|} \bbs{u_i - u_j}^2. 
\end{equation}
\end{lem}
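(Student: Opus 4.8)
The plan is to prove this as a \emph{mesh-independent} discrete Poincaré–Wirtinger inequality, following the finite-volume argument of \cite{eymard2000finite} but keeping track of the invariant-measure weights $\pi_i$.

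First I would remove the weight $\pi$. Since $\nn$ is compact and $\pi$ is a positive smooth invariant measure (recall the discussion around \eqref{FPequi}), there are constants $0<\pi_{\min}\le \pi_i\le \pi_{\max}<\8$ independent of $n$. On the left-hand side, under the weighted mean-zero constraint the second moment equals the weighted variance: for every constant $a$,
\[
\sum_i u_i^2 \pi_i|C_i| = \sum_i (u_i-a)^2\pi_i|C_i| - a^2\sum_i \pi_i|C_i| \le \sum_i (u_i-a)^2\pi_i|C_i|.
\]
Choosing $a=\bar u := \tfrac{1}{|\nn|}\sum_i u_i|C_i|$ the unweighted mean, with $|\nn|=\sum_i|C_i|=\hs^d(\nn)$, and bounding $\pi_i\le\pi_{\max}$ gives $\sum_i u_i^2\pi_i|C_i|\le \pi_{\max}\sum_i(u_i-\bar u)^2|C_i|$. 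Because $\pi_i+\pi_j\ge 2\pi_{\min}$, the claimed right-hand side dominates $2\pi_{\min}\sum_{i,j}\tfrac{|\Gamma_{ij}|}{|\my_i-\my_j|}(u_i-u_j)^2$. Hence it suffices to prove the unweighted inequality $\sum_i (u_i-\bar u)^2|C_i| \le C\sum_{i,j}\tfrac{|\Gamma_{ij}|}{|\my_i-\my_j|}(u_i-u_j)^2$ with $C$ independent of $n$, and then take $c = \tfrac{\pi_{\max}}{2\pi_{\min}}C$.

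Second, I would convert the variance into a sum of pairwise differences via the elementary identity
\[
\sum_i (u_i-\bar u)^2|C_i| = \frac{1}{2|\nn|}\sum_{i,j}|C_i|\,|C_j|\,(u_i-u_j)^2 .
\]
It then remains to control each $(u_i-u_j)^2$ by the nearest-neighbour jumps $D_\sigma u := u_{K}-u_{L}$ associated with faces $\sigma=\Gamma_{KL}$. For an ordered pair $(i,j)$ I take the geodesic segment from $\my_i$ to $\my_j$ and let $c_\sigma(i,j)\in\{0,1\}$ indicate the faces it crosses; telescoping along the crossed faces yields $|u_i-u_j|\le \sum_\sigma c_\sigma(i,j)\,|D_\sigma u|$.

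Third — and this is the main obstacle — I recombine by Cauchy–Schwarz and a geometric counting estimate to recover the face weights with an $n$-independent constant. Writing $d_\sigma=|\my_K-\my_L|$ and $|\sigma|=|\Gamma_{KL}|$, the correct weighting is $d_\sigma$ (so that the number of crossings does not accumulate):
\[
(u_i-u_j)^2 \le \Bigl(\sum_\sigma c_\sigma(i,j)\,d_\sigma\Bigr)\Bigl(\sum_\sigma c_\sigma(i,j)\,\frac{(D_\sigma u)^2}{d_\sigma}\Bigr).
\]
The first factor is the length of the polygonal path through the crossed cell centers, so by shape-regularity it is bounded by $C_1\diam(\nn)$. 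Substituting into the double sum and swapping the order of summation, for each fixed face $\sigma$ I must estimate the crossing mass $\sum_{i,j}c_\sigma(i,j)|C_i||C_j|$; an integral-geometry (Cauchy–Crofton type) computation gives the scaling-sharp bound $\sum_{i,j}c_\sigma(i,j)|C_i||C_j|\le C_2\,\diam(\nn)\,|\nn|\,|\sigma|$, after which the factors $\tfrac{1}{2|\nn|}$, $\diam(\nn)$ and $|\sigma|$ reassemble into $C\sim\diam(\nn)^2$ times $\sum_\sigma\tfrac{|\sigma|}{d_\sigma}(D_\sigma u)^2$, i.e.\ the target right-hand side. The genuinely delicate points are the uniformity of $C_1,C_2$ in $n$ — which is exactly where the shape-regularity of the Voronoi tessellation, guaranteed because the points are sampled from a density bounded above and below \cite{Dejan15}, is essential — and the transfer of the Euclidean argument of \cite{eymard2000finite} to $\nn$: since $\nn$ is smooth and compact, geodesic and ambient distances are locally comparable, the Voronoi graph is connected (orthogonality $\my_K\my_L\perp\Gamma_{KL}$ holds by construction), so the argument localizes in a finite atlas and curvature only affects the constant. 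I deliberately avoid the shorter spectral route (bounding the left side by the inverse spectral gap of the weighted graph Laplacian, whose kernel is the constants by connectedness), since that would leave $C$ dependent on $n$ and defeat the mesh-independent decay rate invoked in the ergodicity argument.
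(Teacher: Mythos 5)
The paper never proves this lemma: it is quoted verbatim from \cite[Lemma 10.2]{eymard2000finite} and used as a black box, so the only meaningful benchmark is the cited Euclidean finite-volume argument, and your outline does reconstruct its general strategy (chord chaining plus face-crossing counts). Your Steps 1 and 2 are complete and correct: reducing the weighted, weighted-mean-zero statement to an unweighted variance inequality at the price of a factor $\pi_{\max}/\pi_{\min}$, the identity $\sum_i(u_i-\bar u)^2|C_i|=\tfrac{1}{2|\nn|}\sum_{i,j}|C_i||C_j|(u_i-u_j)^2$, and the final bookkeeping (Cauchy--Schwarz with weights $d_\sigma$, swap of summation) all assemble correctly into the claimed right-hand side with $c\sim\diam(\nn)^2\,\pi_{\max}/\pi_{\min}$.

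The gap is that the estimates carrying all the real content are asserted rather than proven, and they are precisely the statements whose uniformity in $n$ is at issue. (i) The path-length bound $\sum_\sigma c_\sigma(i,j)\,d_\sigma\le C_1\diam(\nn)$: each crossed face does satisfy $d_\sigma\le 2h$, but a geodesic can graze arbitrarily many cells near their corners, so bounding the number of crossed cells by $C\,d_{\nn}(\my_i,\my_j)/h$ requires an in-radius/packing argument, i.e.\ quantitative shape regularity of the Voronoi cells; for cells generated by sampled points this holds at best with high probability and is not supplied by \cite{Dejan15}, which you cite for it. (ii) The crossing-mass bound $\sum_{i,j}c_\sigma(i,j)|C_i||C_j|\le C_2\diam(\nn)|\nn||\sigma|$: its continuum analogue is indeed a short solid-angle (or Blaschke--Petkantschin/Crofton) computation, but $c_\sigma(i,j)$ is defined through the geodesic between the cell \emph{centers} while the left-hand side weights whole cells, and passing from the discrete count to the continuum integral again needs the same unproven regularity. (iii) Most seriously for this paper's setting, your telescoping presumes that the cells met by a geodesic form a chain of face-adjacent cells, each traversed in a single interval; Euclidean Voronoi cells are convex so this is automatic, but geodesic Voronoi cells on a curved $\nn$ need not be geodesically convex, so a geodesic may re-enter cells and re-cross faces, and the chain structure itself has to be established (this is the substance of ``the argument localizes in a finite atlas,'' not a remark). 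Finally, one caveat about Step 1: you obtain $\pi_{\max}/\pi_{\min}$ from smoothness of the \emph{continuous} invariant measure, but where the paper invokes the lemma (after Proposition \ref{prop_energy}, and in Proposition \ref{prop_pi_con} and Theorem \ref{thm_con}) the weights are the \emph{numerical} steady state $\pi^\8_i$, for which only positivity at each fixed $n$ is known via Perron--Frobenius applied to \eqref{KM}; extracting uniform-in-$n$ bounds on $\pi^\8$ from the convergence theory would be circular, since that theory uses this very lemma. So the mesh-independent constant you rightly insist on needs an ingredient that neither your proof nor the paper provides.
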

Using \eqref{Ddecay3} in Proposition \ref{prop_energy} and then taking $u_i=\frac{\rho_i}{\pi_i} -1$ in Lemma \ref{lem_poin},  we have
\begin{equation}
\begin{aligned}
\frac{\ud}{\ud t} \sum_i \frac{(\rho_i-\pi_i)^2}{\pi_i}|C_i|   \leq -\frac{D}{2} \sum_{i,j}  \frac{|\Gamma_{ij}|(\pi_i+\pi_j)}{|\my_i-\my_j|} \bbs{\frac{\rho_j}{\pi_j} - \frac{\rho_i}{\pi_i}}^2\leq - c \sum_i \frac{(\rho_i-\pi)^2}{ \pi_i} |C_i|.
\end{aligned}
\end{equation}
This implies the exponential decay in $\chi^2$-divergence
\begin{equation}
\sum_i \frac{(\rho_i-\pi_i)^2}{\pi_i}|C_i| \leq  e^{-ct} \,  \sum_i \frac{(\rho_i(0)-\pi_i)^2}{\pi_i}|C_i|.
\end{equation}

\subsection{{ Unconditionally stable implicit scheme solved explicitly}}
Although there is no detailed balance property, we will use a mixed explicit-implicit time discretization to design an unconditionally stable mixed explicit-implicit scheme for \eqref{rhoDeq}, which enjoys a new stochastic-matrix structure. { Since this scheme can be solved explicitly, we will also call it as an explicit scheme for simplicity.}

Let $\rho_i^k|C_i|$ be the discrete density at the discrete time $k\Delta t$. Recall the constant $a>0$ and nonnegative irreducible matrix $K=\frac{Q}{a}+I$ defined in \eqref{KM}. Introduce the mixed explicit-implicit scheme as
\begin{equation}\label{time1}
\rho^{k+1}_i |C_i| - \rho^k_i |C_i| = \sum_{j} a\Delta t \bbs{ K_{ji}\rho_j^k |C_j| -  \rho_i^{k+1} |C_i|},\quad i=1, 2, \cdots, n.
\end{equation}

Now we recast it as a new discrete Markov chain with a transition probability $\tilde{K}$, which is unconditionally stable and enjoys fast convergence to the same steady state $\pi$. Rewrite \eqref{time1} as
\begin{equation}\label{num355}
(1+a \Delta t) \rho^{k+1}_i|C_i| = a \Delta t \sum_j K_{ji}\rho_j^k |C_j| + \rho^k_i|C_i| = \Delta t \sum_j Q_{ji} \rho^k_j |C_j| + (1+a \Delta t) \rho_i^k |C_i|,
\end{equation}
then we obtain a new Markov semigroup
\begin{equation}\label{timeG}
\rho^{k+1}_i |C_i| = \frac{\Delta t}{1+ a \Delta t} \sum_j Q_{ji} \rho^k_j |C_j| + \rho_i^k |C_i| =: \sum_j \tilde{K}_{ji} \rho_j^k |C_j|
\end{equation}
It is easy to verify 
\begin{equation}
 \tilde{K}= \Delta t \frac{Q}{1+a \Delta t} + I =: \Delta t \tilde{Q} + I 
\end{equation}
is a Markov semigroup operator  and $\pi$ being its positive invariant measure. Indeed,  
since 
\begin{equation}\label{tK}
\tilde{K} = \frac{a \Delta t}{1+a \Delta t} \frac{Q}{a} + I, \quad  \frac{a \Delta t}{1+a \Delta t}<1, 
\end{equation}
 we know $\tilde{K}$ is always nonnegative, irreducible, aperiodic matrix and obtain  the right eigenvector $e:=(1,1, \cdots, 1)^T$, i.e. $\tilde{K} e=e$ corresponding to $1$ and 
the left eigenvector, i.e. $\pi^T \tilde{K} = \pi^T$. 
From the Perron-Frobenius
theorem, we have the Jordan decomposition for $\tilde{K}$
\begin{equation}\label{jordan1}
\tilde{K} = S \left(\begin{array}{cc}
1 & 0 \\ 
0 & J
\end{array}  \right) S^{-1} = e \pi^T + S \left(\begin{array}{cc}
0 & 0 \\ 
0 & J
\end{array}  \right) S^{-1} ,
\end{equation} 
where $J$ consists of Jordan blocks with $\tilde{K}$'s eigenvalues $|\mu_i|<1$, $i=2, \cdots, n$  and we used $S_{i1}=e_i$, $S^{-1}_{1j}=\pi_j$.

Therefore, thanks to $\tilde{Q}$-matrix, the explicit scheme \eqref{timeG} still enjoys good properties such as positive preserving, total mass  preserving. Particularly, we give the following proposition for the unconditional  $\ell^1$-contraction and ergodicity.
\begin{prop}[{Stability and ergodicity for fully discretized scheme}]\label{prop_timeD}
Let $\Delta t$ be the time step and let $\rho^k$ be the solution to the explicit scheme \eqref{timeG}. Then we have
\begin{enumerate}[(i)]
 \item the $\ell^1$ contraction, i.e., for any two solutions $\rho^k, \tilde{\rho}^k$,
\begin{equation}\label{l8semi}
 \sum_i  |\rho_i^{k+1} - \tilde{\rho}_i^{k+1}| |C_i| \leq \sum_i   |\rho_i^{n} - \tilde{\rho}_i^{n}| |C_i|;
\end{equation}
\item the exponential convergence
\begin{equation}\label{gap_error}
\|\rho^k - \pi\|_{\ell^1}  \leq c \|\rho_0\|_{\ell^1} |\mu_2|^{k-n+2}, \quad |\mu_2|<1,
\end{equation}
where $\mu_2$ is the second eigenvalue (in terms of the magnitude) of $\tilde{K}= I+\Delta t \frac{Q}{1+a \Delta t}$.
\end{enumerate}
\end{prop}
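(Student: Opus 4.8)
The plan is to handle the two assertions separately, exploiting throughout that the one–step operator $\tilde K$ from \eqref{timeG} is a genuine stochastic matrix (nonnegative entries, unit row sums) that is in addition irreducible and aperiodic, so that its spectral data are exactly those recorded in the Jordan decomposition \eqref{jordan1}. Part (i) is elementary and relies only on the stochasticity of $\tilde K$; part (ii) is a discrete–time transcription of the argument already carried out for Lemma \ref{lem_longtime}.

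For part (i) I would work directly with the vector form $(\rho^{k+1})^T=(\rho^k)^T\tilde K$ of \eqref{timeG}. Setting $w_j:=(\rho^k_j-\tilde\rho^k_j)|C_j|$ for two solutions, linearity gives $w^{k+1}_i=\sum_j \tilde K_{ji}\,w^k_j$. Applying the triangle inequality and interchanging the order of summation,
\[
\sum_i |w^{k+1}_i|\le \sum_j |w^k_j|\sum_i \tilde K_{ji}=\sum_j |w^k_j|,
\]
since each row of $\tilde K$ sums to one. This is precisely the one–step $\ell^1$ contraction (the index on the right of \eqref{l8semi} should read $k$), and iterating it downward yields the stated inequality. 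The only inputs are nonnegativity and unit row sums of $\tilde K$, both verified just after \eqref{timeG}.

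For part (ii) I would mirror the proof of Lemma \ref{lem_longtime}, replacing $e^{Qt}$ by $\tilde K^k$. Write $\tilde K=e\pi^T+R$ with $R:=S\,\mathrm{diag}(0,J)\,S^{-1}$ from \eqref{jordan1}. The structural facts are that $e\pi^T$ is the spectral projection onto the eigenvalue $1$: with the invariant measure normalized so that $\pi^T e=\sum_i \pi_i|C_i|=1$ one has $(e\pi^T)^2=e\pi^T$, and the two spectral projections are complementary, so $e\pi^T R=R\,e\pi^T=0$. Hence the cross terms vanish and $\tilde K^k=e\pi^T+R^k=e\pi^T+S\,\mathrm{diag}(0,J^k)\,S^{-1}$. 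Substituting into $(\rho^k)^T=(\rho_0)^T\tilde K^k$ and using total–mass conservation $(\rho_0)^T e=\sum_i \rho_i(0)|C_i|=1$, the projection term reproduces $\pi^T$ exactly, leaving
\[
(\rho^k)^T-\pi^T=(\rho_0)^T\,S\,\mathrm{diag}(0,J^k)\,S^{-1}.
\]

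It then remains to bound $\|J^k\|$. Each Jordan block factors as $\mu_\ell I+N$ with $N$ nilpotent of order at most $n-1$, so $J_\ell^k=\sum_{m=0}^{n-2}\binom{k}{m}\mu_\ell^{k-m}N^m$; using $|\mu_\ell|\le|\mu_2|<1$ and $k-m\ge k-n+2$ gives $|\mu_\ell|^{k-m}\le|\mu_2|^{k-n+2}$, while the binomial and nilpotent factors contribute only a fixed polynomial in $k$. Taking $\ell^1$ operator norms and absorbing this polynomial factor into the exponentially small gap $|\mu_2|^{k-n+2}$ — exactly as the factor $t^{n-2}$ was absorbed in Lemma \ref{lem_longtime} — produces $\|\rho^k-\pi\|_{\ell^1}\le c\,\|\rho_0\|_{\ell^1}\,|\mu_2|^{k-n+2}$. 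I expect this last step to be the main obstacle: controlling the powers of the non-diagonalizable part uniformly in $k$, for which the bound $n-1$ on the block sizes and the strict spectral gap $|\mu_2|<1$ furnished by the Perron–Frobenius theorem are both essential. Everything else is bookkeeping parallel to the continuous–time computation already established.
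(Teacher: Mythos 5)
Your proposal is correct and follows essentially the same route as the paper: part (i) uses exactly the stochasticity of $\tilde K$ (nonnegative entries, unit row sums) to get the one-step $\ell^1$ contraction, and part (ii) repeats the paper's argument via the Jordan decomposition \eqref{jordan1}, mass conservation $\rho_0^T e=1$, and the bound on $\|J^k\|_{\ell^1}$ through the nilpotent-plus-$\mu_\ell I$ structure of each block. If anything, you are slightly more careful than the paper on the one delicate point — the polynomial factor $\binom{k}{m}$ cannot literally be absorbed into $|\mu_2|^{k-n+2}$ with a $k$-independent constant, but must be traded for a marginally weaker rate (e.g.\ $|\mu_2|^{(k-n+2)/2}$), exactly as the factor $t^{n-2}$ was handled in Lemma \ref{lem_longtime} — and you correctly note the index typo in \eqref{l8semi}.
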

\begin{proof}
First, let $\rho$ and $\tilde{\rho}$ be two solutions to \eqref{timeG}. Denote $e^k_i=\rho^k_i-\tilde{\rho}^k_i$. Then  \eqref{timeG} becomes
\begin{equation}
e^{k+1}_i |C_i| = \frac{\Delta t}{1+ a \Delta t} \sum_j Q_{ij} e^k_j |C_j| + e_i^k |C_i| = \frac{\Delta t}{1+ a \Delta t} \sum_{j\neq i} Q_{ij} e^k_j |C_j| + \bbs{1+\frac{Q_{ii}\Delta t}{1+ a \Delta t}} e_i^k |C_i|.
\end{equation}
From \eqref{tK}, each term above is nonnegative. Then take absolute value and summation in $i$ to show that
\begin{equation}
\begin{aligned}
\sum_i |e^{n+1}_i||C_i|  \leq&   \frac{\Delta t}{1+ a \Delta t} \sum_{i,j} Q_{ij} |e^k_j| |C_j| + \sum_i |e_i^k| |C_i|= \sum_i |e_i^k| |C_i|.
\end{aligned}
\end{equation}

Second, 
\eqref{timeG} and the Jordan decomposition \eqref{jordan1} yield
\begin{equation}
(\rho^{k} - \pi)^T = \rho_0^T \tilde{K}^k - \pi^T  = \rho^T_0 S \left(\begin{array}{cc}
0 & 0 \\ 
0 & J^n
\end{array}  \right) S^{-1}. 
\end{equation} 
Then from $\|J^n\|_{\ell^1} \leq c|\mu_2|^{k-n+2}$ and $\tilde{K}$'s  second eigenvalue $|\mu_2|<1$, we conclude
\begin{equation}
\|\rho^k - \pi\|_{\ell^1}  \leq c \|\rho_0\|_{\ell^1} |\mu_2|^{k-n+2}.
\end{equation}
\end{proof}

 The advantage of this mixed implicit-explicit scheme \eqref{time1} is that the Markov semigroup is always positive no matter how larger $\Delta t$ is. However, when $\Delta t$ becomes large, the spectral gap $1-|\mu_2|$ becomes small.

\section{Convergence and error estimates}\label{sec4}

In this section, we present  convergence and error estimates for the upwind scheme \eqref{mp} including  error estimates for numerical steady state solution $\pi^\8_i$ and the numerical dynamic solution $\rho_i(t)$. The estimates relies on the Taylor expansion along geodesic between each cell center and the discrete energy dissipation in Proposition \ref{prop_energy}. Same procedures can be done for $\pi$-symmetric upwind scheme and will be omitted.
\subsection{Error estimate for steady state $\pi$}
Let $\pi(\my)$ be the exact solution to stationary Fokker-Planck equation \eqref{FPequi} and let $\pi_i$ be the numerical solution satisfying \eqref{equiF}. In this section, for notation simplicity, we denote $\pi_i$ as the numerical steady state $\pi^\8_i$.
Plug the exact solution $\pi(\my)$  into the numerical scheme and we estimate 
\begin{equation}
\eps^\pi_{ji} :=  \int_{\Gamma_{ij}} \mathbf{n} \cdot \bbs{D \nabla \pi - \vec{b} \pi} \ud \hs^{d-1}  ~ - ~ F^{\pi_{\text{ex}}}_{ji},
\end{equation}
where $\mathbf{n}$ is the restriction of the unit outward normal vector field on $\Gamma_{ij}$.
Let $e_i:=\pi_i-\pi(\my_i)$ be the error between   the numerical steady state and the exact steady state. Then we have
\begin{equation}\label{errorEqui}
\begin{aligned}
&F^e_{ji}:=F^{\pi}_{ji}- F^{\pi_{\text{ex}}}_{ji}=|\Gamma_{ij}| \bbs{\frac{D \bbs{e_j-e_i}}{|\my_j-\my_i|}  + (\vec{b}\cdot \vec{n})_{ij}^- e_j -  (\vec{b}\cdot \vec{n})_{ij}^+ e_i }= -F^e_{ij},\\
&\sum_j F^e_{ji}  =\sum_j  \eps^{\pi}_{ji}.
\end{aligned}
\end{equation}
\begin{prop}\label{prop_pi_con}
Let $e_i:=\pi_i-\pi(\my_i)$ be the error between   the numerical steady state $\pi_i$ and the exact steady state $\pi(\my_i)$. Assume the Vonoroi tessellation satisfies
\begin{equation}\label{asm_V}
\sum_{i}\sum_{j\in VF(i)} |\Gamma_{ij}| |\my_j - \my_i| \leq c.
\end{equation}
Then we have
\begin{equation}\label{Pi_con}
\sum_{i} \frac{e_i^2}{\pi_i} |C_i| \leq c \sum_{i,j} \frac{|\Gamma_{ij}|}{|\my_j-\my_i|} \bbs{\frac{e_j}{\pi_j} - \frac{e_i}{\pi_i}}^2  \leq c h^2.
\end{equation}
\end{prop}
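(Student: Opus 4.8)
The plan is to establish the two inequalities separately: the first is a discrete Poincaré estimate, and the second is an energy-type estimate driven by the consistency (truncation) error. Throughout I normalize the numerical steady state $\pi_i$ so that its total discrete mass matches that of the exact one, i.e. $\sum_i e_i |C_i| = 0$; this is legitimate since both $\pi_i$ and $\pi(\my_i)$ are determined only up to a scalar by the stationary relations, and it makes $u_i := e_i/\pi_i$ satisfy the mean-zero hypothesis $\sum_i u_i \pi_i|C_i| = 0$. Since $\nn$ is compact and $\pi$ is a smooth positive invariant measure, $\pi$ is bounded above and below by positive constants, so $\pi_i+\pi_j$ is comparable to a constant uniformly in $i,j$. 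The first inequality then follows immediately from Lemma \ref{lem_poin} applied to $u_i = e_i/\pi_i$, upper-bounding the factor $\pi_i+\pi_j$ appearing on its right-hand side by $2\max\pi$ and absorbing it into $c$.

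For the second inequality, the key observation is that the error flux $F^e_{ji}$ in \eqref{errorEqui} has exactly the same algebraic form as the flux $F_{ji}$ with $e$ in place of $\rho$, so it inherits the discrete $\pi$-symmetric decomposition \eqref{decom}, namely $F^e_{ji} = \frac12\alpha_{ij}\bbs{\frac{e_j}{\pi_j}-\frac{e_i}{\pi_i}} + \frac12 F^\pi_{ji}\bbs{\frac{e_j}{\pi_j}+\frac{e_i}{\pi_i}}$. Multiplying the error identity $\sum_j F^e_{ji} = \sum_j \eps^\pi_{ji}$ by $\frac{e_i}{\pi_i}$ and summing over $i$, the antisymmetric ($T$) part drops out exactly as in the proof of Proposition \ref{prop_energy} (using $\sum_j F^\pi_{ji}=0$ and the antisymmetry $F^\pi_{ij}=-F^\pi_{ji}$), leaving the Dirichlet form $-\frac14\sum_{i,j}\alpha_{ij}\bbs{\frac{e_j}{\pi_j}-\frac{e_i}{\pi_i}}^2$ on the left. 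Invoking the lower bound \eqref{al_error} together with $\pi_i+\pi_j \geq 2\min\pi>0$ gives $\alpha_{ij}\geq c_0 \frac{|\Gamma_{ij}|}{|\my_j-\my_i|}$, so the left-hand side controls $c_0\, X$ with $X := \sum_{i,j}\frac{|\Gamma_{ij}|}{|\my_j-\my_i|}\bbs{\frac{e_j}{\pi_j}-\frac{e_i}{\pi_i}}^2$, the quantity to be estimated.

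On the right-hand side I use that $\eps^\pi_{ji}$ is antisymmetric ($\eps^\pi_{ij}=-\eps^\pi_{ji}$, since both the face-flux integral and the numerical flux change sign under $i\leftrightarrow j$). A discrete summation by parts turns $\sum_i \frac{e_i}{\pi_i}\sum_j\eps^\pi_{ji}$ into $-\frac12\sum_{i,j}\eps^\pi_{ji}\bbs{\frac{e_j}{\pi_j}-\frac{e_i}{\pi_i}}$, and a weighted Cauchy--Schwarz inequality bounds this by $\frac12 R^{1/2} X^{1/2}$, where $R := \sum_{i,j}\frac{|\my_j-\my_i|}{|\Gamma_{ij}|}(\eps^\pi_{ji})^2$. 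Combining $\frac{c_0}{4}X \leq \frac12 R^{1/2}X^{1/2}$ yields $X \leq c R$, so it only remains to show $R\leq ch^2$.

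The final and most delicate step is the consistency estimate $|\eps^\pi_{ji}| \leq c\,|\Gamma_{ij}|\,|\my_j-\my_i|$ (with $h:=\max_i\diam C_i$), obtained by Taylor-expanding the exact smooth solution $\pi$ along the geodesic joining $\my_i$ and $\my_j$ and comparing the resulting face-flux integral against the first-order upwind formula; the diffusive difference quotient and the upwinded convection term are each consistent to first order. Granting this, $R \leq c\sum_{i,j}|\Gamma_{ij}|\,|\my_j-\my_i|^3 \leq ch^2\sum_{i,j}|\Gamma_{ij}|\,|\my_j-\my_i| \leq ch^2$ by the mesh regularity assumption \eqref{asm_V}, which closes the argument via $\sum_i\frac{e_i^2}{\pi_i}|C_i| \leq cX \leq cR \leq ch^2$. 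I expect this geometric truncation analysis --- tracking curvature effects and the mismatch between cell centers, geodesic--face intersection points, and face centroids in the Taylor expansion on $\nn$ --- to be the main obstacle; the rest is a structural repeat of the discrete energy dissipation computation.
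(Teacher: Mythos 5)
Your proposal is correct and follows essentially the same route as the paper's proof: the discrete $\pi$-symmetric decomposition of the error flux kills the antisymmetric part, a weighted Cauchy--Schwarz/Young inequality against the Dirichlet form together with the lower bound \eqref{al_error} reduces everything to the truncation error, and Lemma \ref{lem_poin} applied to $u_i=e_i/\pi_i$ (using $\sum_i e_i|C_i|=0$) gives the first inequality. The consistency bound you deferred as the ``main obstacle,'' $|\eps^\pi_{ji}|\le c\,h\,|\Gamma_{ij}|$, is exactly what the paper's Step~1 establishes via the geodesic Taylor expansion you describe, and your explicit weight $|\Gamma_{ij}|/|\my_j-\my_i|$ in the Cauchy--Schwarz step is equivalent to the paper's choice of weight $\alpha_{ij}$.
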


\begin{proof}
Step 1. Estimates for truncation error $\eps_{ji}^\pi$.

Let $G_{ij}$ be the bisector between $\my_i$ and $\my_j$, which is a $d-1$ submanifold containing $\Gamma_{ij}$. Suppose $\my^*$ is the intersection point of the  geodesic from $\my_i$ to $\my_j$ and $G_{ij}$. We have $d_{\nn}(\my^*, \my_i)=d_{\nn}(\my^*, \my_j)$.  Notice the unit tangent vector of the  geodesic at $\my^*$ is perpendicular to $\Gamma_{ij}$ at $\my^*$ and can be chosen as the unit normal $\vec{n}_{ij}(\my^*)$\footnote{on the $d-1$ dimensional submanifold containing $\Gamma_{ij}$}.   From the Taylor expansion of $\pi$ along the geodesic, we have
\begin{align}
&\pi(\my_j)-\pi(\my^*)=\vec{n}_{ij} \cdot \nabla \pi(\my^*)d_{\nn}(\my^*, \my_j)+O(d^2_{\nn}(\my^*, \my_j)), \\
& {\pi}(\my^*)- {\pi}(\my_i)=\vec{n}_{ij} \cdot \nabla \pi(\my^*)d_{\nn}(\my^*, \my_i)+O(d^2_{\nn}(\my^*, \my_i)). 
\end{align}
  Therefore, if we add the above two equations , we have
\begin{align}
& {\pi}(\my_j)- {\pi}(\my_i)=\mathbf{n}_{ij} \cdot \nabla  {\pi}(\my^*) d_{\nn}(\my_i, \my_j)+O(d^2_{\nn}(\my_i, \my_j)).
\end{align}
Hence,
\begin{align}\label{tm1}
\mathbf{n}_{ij} \cdot \nabla  {\pi}(\my^*)=\frac{ {\pi}(\my_j)- {\pi}(\my_i)}{ d_{\nn}(\my_i, \my_j)}+O(d_{\nn}(\my_i, \my_j))=\frac{{\pi}(\my_j)- {\pi}(\my_i) }{|\my_i-\my_j|}+O(d_{\nn}(\my_i, \my_j)),
\end{align}
where we used $|\my_i-\my_j| = d_{\nn}(\my_i, \my_j) + o(d_{\nn}(\my_i, \my_j)) $ for $d_{\nn}(\my_i, \my_j)$ small enough. Similarly,
\begin{align}
&\pi(\my_j)-\pi(\my^*)=O(d_{\nn}(\my^*, \my_j)), \\
&\pi(\my^*)-\pi(\my_i)=O(d_{\nn}(\my^*, \my_i)). 
\end{align}
Hence we have
\begin{align}\label{tm2}
\bbs{ -(\vec{b}\cdot \vec{n})^+_{ij} + (\vec{b}\cdot \vec{n})^-_{ij}  }\pi(\my^*)= -(\vec{b}\cdot \vec{n})^+_{ij} \pi(\my_i) + (\vec{b}\cdot \vec{n})^-_{ij}  \pi(\my_j) +O(d_{\nn}(\my_i, \my_j)).
\end{align}
Thus if $\my^*$ is changes to any $\my$ on $\Gamma_{ij}$, 
$$O(d_{\nn}(\my_i,\my)+d_{\nn}(\my_i,\my_j)) = O(\diam(C_i)+d_{\nn}(\my_i,\my_j)) \leq O(h),
$$
where
\begin{align}\label{hh}
h:=\max\big( \max_{i=1, \ldots, n}(\diam(C_i)), \max_{i=1, \ldots, n}(\max_{j\in VF(i)} d_{\nn}(\my_i,\my_j))\big).
\end{align}
This, together with \eqref{tm1} and \eqref{tm2}, we conclude
\begin{equation}\label{Terror}
\eps^\pi_{ij} \leq c h |\Gamma_{ij}|.
\end{equation}

Step 2. $H^1$ estimates via anti-symmetric structure.

Using same derivations as Proposition \ref{prop_energy}, we have
\begin{equation}\label{DecayPi}
\begin{aligned}
 \sum_{i} \bbs{\frac{e_i}{\pi_i} \sum_j F_{ji}^e } = -\frac12 \sum_{i,j} \alpha_{ij} \bbs{\frac{e_j}{\pi_j} - \frac{e_i}{\pi_i}}^2,
\end{aligned}
\end{equation}
where $\alpha_{ij}$ defined in \eqref{al}.
And thus \eqref{errorEqui} implies
\begin{equation}
\begin{aligned}
 \sum_{i} \bbs{\frac{e_i}{\pi_i} \sum_j F_{ji}^e } = \sum_{i,j}\eps^\pi_{ji} \frac{e_i}{\pi_i} =  \frac12\sum_{i,j} \eps^\pi_{ji} \bbs{\frac{e_i}{\pi_i} - \frac{e_j}{\pi_j}}. 
\end{aligned}
\end{equation}
Then combining this with \eqref{DecayPi} and  Young's inequality,  we know
\begin{equation}
\begin{aligned}
  \sum_{i,j} \alpha_{ij} \bbs{\frac{e_j}{\pi_j} - \frac{e_i}{\pi_i}}^2  = -\sum_{i,j} \eps^\pi_{ji} \bbs{\frac{e_i}{\pi_i} - \frac{e_j}{\pi_j}}\leq \frac12 \sum_{i,j} \alpha_{ij} \bbs{\frac{e_i}{\pi_i} - \frac{e_j}{\pi_j}}^2 + \frac12\sum_{i,j}\frac{(\eps_{ji}^\pi)^2}{\alpha_{ij}}.
  \end{aligned}
\end{equation}
Then by \eqref{Terror} and \eqref{al_error}, we conclude
\begin{equation}
 \sum_{i,j} \alpha_{ij} \bbs{\frac{e_j}{\pi_j} - \frac{e_i}{\pi_i}}^2  \leq \sum_{i,j}\frac{(\eps_{ji}^\pi)^2}{\alpha_{ij}} \leq c h^2 \sum_{i,j}   |\Gamma_{ij}||\my_j-\my_i| \leq c h^2.
\end{equation}
Moreover,  since $\sum_i e_i|C_i|=0$, we take $u_i=\frac{e_i}{\pi_i}$ in Lemma \ref{lem_poin} to obtain
\begin{equation}
\sum_{i} \frac{e_i^2}{\pi_i} |C_i| \leq c \sum_{i,j} \alpha_{ij} \bbs{\frac{e_j}{\pi_j} - \frac{e_i}{\pi_i}}^2 \leq ch^2.
\end{equation}
This, together with the definition of $\alpha_{ij}$ in \eqref{al}, concludes \eqref{Pi_con}. 
 \end{proof}

\subsection{Convergence of dynamic solution in $\chi^2$-discrepancy}
Let $\rho_t(\my)$ be the exact solution to \eqref{FP-N}. Let $e_i(t) := \rho_t(\my_i)-\rho_i(t)$ be the error between the exact solution and the numerical solution. Now for any fixed $T>0$, we give the convergence result in terms of the $\chi^2$-discrepancy between the  exact solution and the numerical solution.
\begin{thm}\label{thm_con}
Let $e_i(t):=\rho_i(t)-\rho_t(\my_i)$ be the error between   the numerical solution $\rho_i(t)$ and the exact steady state $\rho_t(\my_i)$. Assume the Vonoroi tessellation satisfies \eqref{asm_V},
then for any $T>0$ and $h$ defined in \eqref{hh}, we have 
\begin{equation}
\max_{t\in[0,T]} \sum_i  \frac{e_i(t)^2}{\pi_i}|C_i| \leq \bbs{ \sum_i  \frac{e_i(0)^2}{\pi_i}|C_i| +O( h^2) } e^T .
\end{equation}
\end{thm}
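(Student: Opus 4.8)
The plan is to imitate the stationary argument of Proposition~\ref{prop_pi_con}, now propagating the weighted error $y(t):=\sum_i \frac{e_i(t)^2}{\pi_i}|C_i|$ through a Gr\"onwall inequality. First I would set up the error equation. Integrating \eqref{FP-N} over $C_i$, invoking the divergence theorem and the point-value approximation \eqref{alg11}, the exact solution obeys
\[
\frac{\ud}{\ud t}\bbs{\rho_t(\my_i)|C_i|} = \sum_{j} F^{\rho_{\text{ex}}}_{ji} + \sum_j \eps_{ji} - \dot r_i,
\]
where $F^{\rho_{\text{ex}}}_{ji}$ is the numerical flux \eqref{mp} evaluated on the point values $\rho_t(\my_i)$, the face term $\eps_{ji}:=\int_{\Gamma_{ij}}\mathbf n\cdot(D\nabla\rho_t-\vec b\rho_t)\ud\hs^{d-1}-F^{\rho_{\text{ex}}}_{ji}$ is the spatial flux truncation, and $r_i:=\int_{C_i}\bbs{\rho_t-\rho_t(\my_i)}\ud\hs^d$ is the quadrature error. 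Subtracting \eqref{mp} and using that the flux is linear in $\rho$ gives, exactly as in \eqref{errorEqui},
\[
\frac{\ud}{\ud t}\bbs{e_i|C_i|} = \sum_j F^e_{ji} - \sum_j \eps_{ji} + \dot r_i .
\]

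Next I would estimate the two truncations uniformly on $[0,T]$. The spatial part is handled verbatim as in Step~1 of Proposition~\ref{prop_pi_con}: Taylor expanding $\rho_t$ along the geodesic from $\my_i$ to $\my_j$ yields $\eps_{ji}\leq c h|\Gamma_{ij}|$, with $c$ uniform in time thanks to the $C^2$ bounds on $\rho_t$. The quadrature part satisfies $\dot r_i = O(h|C_i|)$ by \eqref{alg11} together with uniform bounds on $\pt_t\rho_t$.

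Then I would run the energy estimate: multiply the error equation by $\frac{e_i}{\pi_i}$ and sum over $i$, so the left side becomes $\frac12\frac{\ud}{\ud t}y$. The flux term reproduces the discrete Dirichlet dissipation $-\frac12\sum_{i,j}\alpha_{ij}\bbs{\frac{e_j}{\pi_j}-\frac{e_i}{\pi_i}}^2$ through the $\pi$-symmetric decomposition \eqref{decom}, as in Proposition~\ref{prop_energy}. Using the antisymmetry $\eps_{ij}=-\eps_{ji}$ and Young's inequality, the spatial-truncation term is absorbed into half of this dissipation plus $\frac14\sum_{i,j}\frac{\eps_{ji}^2}{\alpha_{ij}}\leq ch^2$, where the last bound uses \eqref{al_error} and \eqref{asm_V} precisely as in Proposition~\ref{prop_pi_con}. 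The quadrature term is bounded by Cauchy--Schwarz and Young by $\frac12 y+\frac12\sum_i\frac{\dot r_i^2}{\pi_i|C_i|}$, and $\sum_i\frac{\dot r_i^2}{\pi_i|C_i|}=O(h^2)$ since $\dot r_i=O(h|C_i|)$ and $\sum_i|C_i|=|\nn|$. Dropping the leftover negative dissipation gives $\frac{\ud}{\ud t}y\leq y+ch^2$, and Gr\"onwall's inequality produces $y(t)\leq\bbs{y(0)+O(h^2)}e^{t}\leq\bbs{y(0)+O(h^2)}e^{T}$ for all $t\in[0,T]$, which is the assertion after taking the maximum.

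The main obstacle, and the genuinely new ingredient beyond the stationary estimate, is the temporal/quadrature residual $\dot r_i$: I must control it in the correctly weighted norm, $\sum_i\frac{\dot r_i^2}{\pi_i|C_i|}=O(h^2)$, and recognize that the accompanying $\frac12 y$ contribution is exactly what generates the $e^{T}$ factor. Making both the spatial and temporal constants uniform requires uniform-in-time a priori regularity of the exact solution $\rho_t$ on $[0,T]$ and a uniform positive lower bound on the numerical steady state $\pi_i$ (available from its positivity and its $O(h)$ convergence to the strictly positive continuous $\pi$ in Proposition~\ref{prop_pi_con}); everything else is a faithful time-dependent rerun of the stationary computation.
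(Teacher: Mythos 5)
Your proposal is correct and follows essentially the same route as the paper's proof: plug the exact solution into the scheme, use the $\pi$-symmetric decomposition \eqref{decom} so the flux applied to the error yields the Dirichlet dissipation (the $F^\pi$ part dropping out as in Proposition \ref{prop_energy}), absorb the antisymmetric spatial truncation $\eps_{ji}\le ch|\Gamma_{ij}|$ by Young's inequality exactly as in Proposition \ref{prop_pi_con}, bound the quadrature residual (your $\dot r_i$ is precisely the paper's $\pt_t(\rho(\my_i)-\rho_i^e)|C_i|$ term), and conclude by Gr\"onwall. Your explicit flagging of the uniform-in-time regularity of $\rho_t$ and the uniform lower bound on $\pi_i$ makes precise hypotheses the paper leaves implicit, but does not change the argument.
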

\begin{proof}
Let  $\rho_i^e:= \frac{1}{|C_i|}\int_{C_i} \rho \ud y$ be the cell average. Plug the exact solution into the numerical scheme
\begin{equation}\label{mp-exact}
\begin{aligned}
\pt_t (\rho_i^e |C_i|) &=  \sum_{j\in VF(i)} F^{ex}_{ji}  + \sum_{j\in VF(i)}\eps_{ji},\\
&\eps_{ji}:= \int_{\Gamma_{ij}} \mathbf{n}  \cdot \bbs{D \nabla \rho - \vec{b} \rho} \ud \hs^{d-1}  ~ - ~ F^{ex}_{ji},
\end{aligned}
\end{equation}
where $\mathbf{n} $ is the restriction of the unit outward normal vector field on $\Gamma_{ij}$ and 
\begin{equation}
F^{ex}_{ji}:=\frac{1}{2} \alpha_{ij} \bbs{\frac{\rho(\my_j)}{\pi_j} - \frac{\rho(\my_i)}{\pi_i}}      +\frac{F^\pi_{ji}}{2} \bbs{\frac{\rho(\my_i)}{\pi_i}+\frac{\rho(\my_j)}{\pi_j}}
\end{equation}
with $\alpha_{ij}$ defined in \eqref{al}.
 Exchanging $i,j$ above, we  see both $\eps_{ji}$ and $F_{ji}^{ex}$ are  anti-symmetric.

Subtracting the numerical scheme \eqref{mp} from \eqref{mp-exact}, 
we have
\begin{equation}
\frac{\ud}{\ud t} e_i|C_i|  =  \sum_{j\in VF(i)} \bbs{ \frac{\alpha_{ij}}{2} \left(  \frac{e_j}{\pi_j}- \frac{e_i}{\pi_i}\right) + \frac{F^\pi_{ji}}{2} \bbs{\frac{e_i}{\pi_i}+\frac{e_j}{\pi_j}}} + \sum_{j\in VF(i)} \eps_{ji}+  \pt_t  (\rho(\my_i) -\rho_i^e)|C_i| .
\end{equation}
Similar to the derivation of dissipation relation \eqref{Ddecay2}, we multiply this by  $\frac{2e_i}{\pi_i}$ and use \eqref{equiF} to show that
\begin{equation}
 \frac{\ud}{\ud t} \sum_i \frac{e_i^2}{\pi_i}|C_i| = -\sum_{i,j} \frac{ \alpha_{ij}}{2}   \left(  \frac{e_j}{\pi_j}- \frac{e_i}{\pi_i}\right)^2 \,+ \sum_{i,j}   \eps_{ji} \left(\frac{e_i}{\pi_i} - \frac{e_j}{\pi_j} \right) +\sum_i 2 \pt_t(\rho(\my_i) -\rho_i^e)  |C_i| \frac{e_i}{\pi_i},
\end{equation}
due to $\eps_{ij}=-\eps_{ji}$.
Applying  Young's inequality to the last two terms, we have
\begin{equation}
\begin{aligned}
&\sum_{i,j} \eps_{ji} \left(\frac{e_i}{\pi_i} - \frac{e_j}{\pi_j} \right)
\leq  \sum_{i,j}  \frac{\alpha_{ij}}{4}  \left(  \frac{e_j}{\pi_j}- \frac{e_i}{\pi_i}\right)^2 +  \sum_{i} \sum_{j\in VF(i)} \frac{\eps^2_{ji}}{\alpha_{ij}};\\
&\sum_i 2 \pt_t(\rho(\my_i) -\rho_i^e)|C_i|   \frac{ e_i}{\pi_i}\leq \sum_i  [\pt_t(\rho(\my_i) -\rho_i^e)]^2  \frac{ |C_i|}{\pi_i}  + \sum_i   \frac{  e_i^2}{\pi_i}|C_i|.
\end{aligned}
\end{equation}
Thus we have
\begin{align}
 \frac{\ud}{\ud t} \sum_i  \frac{e_i(t)^2}{\pi_i} |C_i|
\leq &  \sum_{i,j}  \frac{\eps^2_{ji}}{\alpha_{ij}} +\sum_i  [\pt_t(\rho(\my_i) -\rho_i^e)]^2  \frac{ |C_i|}{\pi_i}  + \sum_i   \frac{ e_i^2 }{\pi_i}|C_i|.
\end{align}

The estimates for $\eps_{ji}$ and $\pt_t(\rho(\my_i) -\rho_i^e)$ using Taylor expansion on manifold  are same as \eqref{Terror}.
From \eqref{Terror} and \eqref{al_error}, we have
\begin{equation}
\frac{\eps^2_{ji}}{\alpha_{ij}} \leq c h^2 |\my_j - \my_i||\Gamma_{ij}|.
\end{equation}
From the assumption \eqref{asm_V},
we know 
\begin{equation}
\sum_{i,j}\frac{\eps^2_{ji}}{\alpha_{ij}} \leq c h^2.
\end{equation}
 By Gronwall's inequality, we obtain for any $T>0$,
\begin{equation}
\max_{t\in[0,T]} \sum_i  \frac{e_i(t)^2}{\pi_i}|C_i| \leq \big (\sum_i  \frac{e_i(0)^2}{\pi_i}|C_i| +O( h^2) \big)  e^T .
\end{equation}
\end{proof}
{ We remark that for numerical solution itself, the  exponential convergence to the numerical invariant measure (which recovers exact invariant measure) is proved in Section \ref{sec3}. However, the error between the exact solution and the numerical solution still depends on $e^T$ as long as one use Gronwall's inequality in the error estimate. With the well-balanced property in \eqref{dis_db}, it is possible to obtain a uniform error estimate but we leave it for the future study.}

\section{Numerical examples}\label{sec5} 

In this section, we demonstrate three examples based on two upwind schemes developed in Section \ref{sec2}. Section \ref{sec_5.1pi} focus on the case that we know  information of an invariant measure $\pi$. In realistic situations,  knowing the output of $\pi$ from a computer code instead of  an  analytic formula is enough.  In this case, we use $\pi$-symmetric upwind scheme \eqref{mp-pi} to conduct two interesting applications: (i)   efficient sampling enhanced by an incompressible mixture flow (ii) image transformations immersed in a mixture flow. Section \ref{sec_5.2van} focus on the case we only know  the  drift vector field $\vec{b}$ in the irreversible drift-diffusion process \eqref{sde-y} and we adapt the numerical scheme \eqref{mp}  to solve both a steady state solution and simulate the irreversible dynamics. The  $\pi$-symmetric upwind scheme  \eqref{num2d_pi} and  upwind scheme \eqref{Van_scheme} for a 2D structured grids case with no-flux boundary conditions are given in Appendix \ref{app1} and Appendix \ref{app2} for completeness.

\subsection{Simulations for  irreversible dynamics given an invariant measure}\label{sec_5.1pi}
In this section, assume we know  the output of an invariant measure $\pi$, which could be a steady state in a biochemical process, or a given target density function in sampling, or  given images. We will use $\pi$-symmetric upwind scheme \eqref{mp-pi} for a 2D structured grids to demonstrate two examples.  

\subsubsection{Example: Sampling  enhanced by an incompressible mixture flow}
Take 2D domain as $\Omega=[a,b]\times[c,d]=[-4.5,4.5]\times[-4.5,4.5]$.
Choose the   stream function for a 2D sinusoidal cellular flow
\begin{equation}\label{psi}
\psi(x,y):= A\sin \frac{k\pi(x-a)}{b-a} \sin \frac{k\pi(y-c)}{d-c},
\end{equation} 
where $A$ represents the amplitude of the mixture velocity $\vec{u}$ and $k$ is the normalized wave number  of the mixture.
Then the incompressible velocity field 
$
\vec{u} = \bbs{\begin{array}{cc}
-\pt_y \psi\\
\pt_x \psi
\end{array}
} $
can be discretized using \eqref{streamU}.

Now we use \eqref{F_num}, i.e., \eqref{num2d_pi} for 2D structured grids, to sample a given target density: a smiling triple-banana in \eqref{smile}. Based on the Laplace principle, we can use the smooth minimum method to construct a smiling triple-banana as a target density
 \begin{equation}\label{smile}
 \begin{aligned}
 \pi(x,y) \propto &e^{ -20\big[\bbs{x-\frac65}^2+\bbs{y-\frac{6}{5}}^2 - \frac12\big]^2  + \log\bbs{e^{-10(y-2)^2}} } +  e^{-20 \big[\bbs{x+\frac65}^2+\bbs{y-\frac{6}{5}}^2 - \frac12\big]^2 + \log\bbs{e^{-10(y-2)^2}}} \\
 &+ e^{-20 \bbs{x^2+y^2 - 2}^2 + \log\bbs{e^{-10(y+1)^2}}}+0.1;
 \end{aligned}
 \end{equation}
 see Fig.\ref{fig_sample1}(right).
Then we take a  Gaussian mixture
\begin{equation}
\rho_0(x,y)\propto e^{-16(x+3)^2-4y^2}+ e^{-16(x-3)^2-4y^2}+ e^{-4x^2-16(y+3)^2}+ e^{-4x^2-16(y-3)^2}+0.1
\end{equation}
as an initial density; see Fig.\ref{fig_sample1}(left).  
\begin{figure}
\begin{center}
 \includegraphics[scale=0.3]{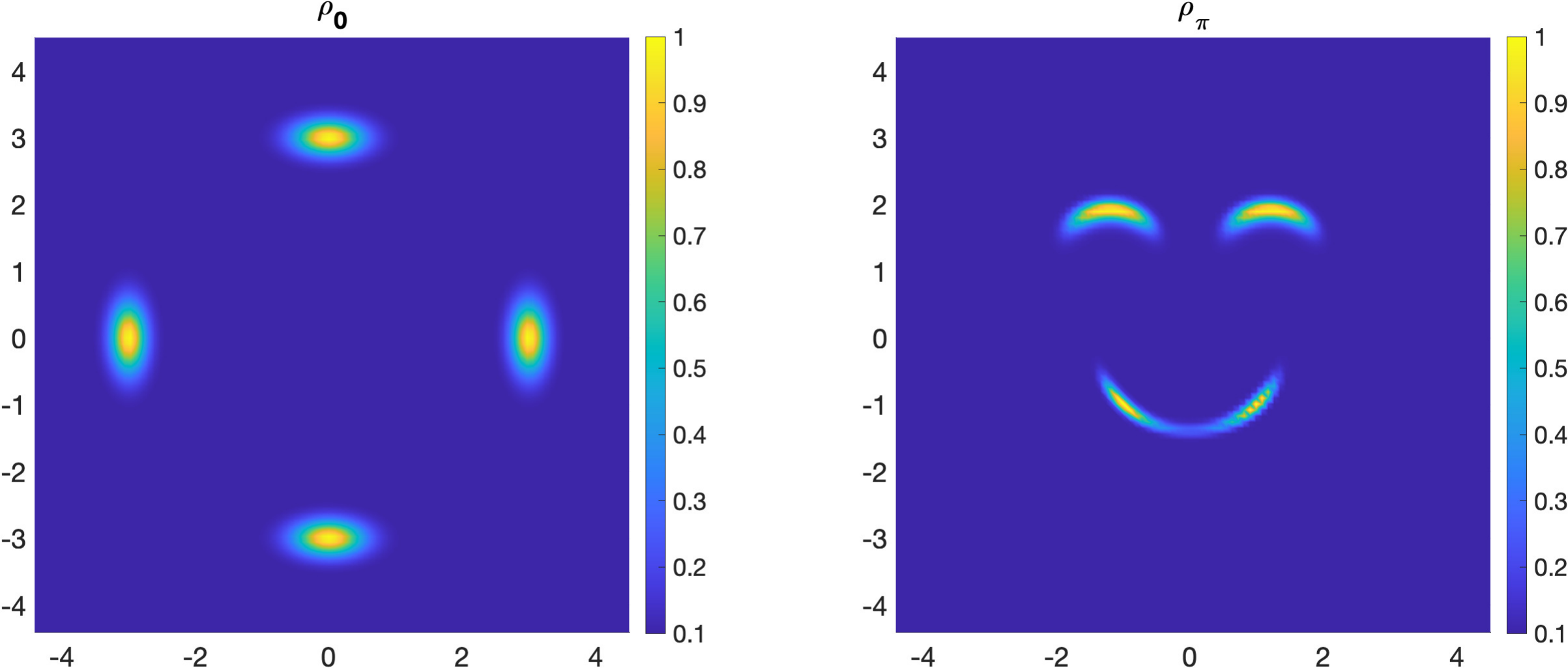} 
 \caption{The normalized initial density (left) and the target density: smiling triple banana (right). }\label{fig_sample1}
 \end{center}
 \end{figure} 
Set the computational parameters as diffusion constant $D=0.5$, time step $\Delta t = 0.01$, uniform grid size $\Delta x =\Delta y=0.09 $. Then with the amplitude of the mixture velocity $A=0.1$ and the frequency $k=8$, the time evolution of dynamic density $\rho_t$ is shown at time iteration $n_t=50, 200, 1200, 10000$ in Fig.\ref{fig_sample_time}. Moreover, the relative  root mean square error between the dynamic solution $\rho_t$ and the target density $\pi$ are shown  w.r.t time iterations using semilog plot in Fig.\ref{fig_sample_e} with different amplitude of the mixture velocity $A=0$ and $0.1$. Notice if $A=0$, then the Fokker-Planck equation \eqref{FPr} and the corresponding upwind scheme \eqref{F_num} are reduced to the reversible case, i.e., we only have the gradient flow part in \eqref{FPr}. The corresponding  $Q$-matrix for the reversible case ($\vec{u}=0$) of course also leads to an efficient sampling for the target density. However,   the additional incompressible convection with  larger mixture velocity $\vec{u}$, although makes the dynamics irreversible, can speed up the convergence to the target density function. This observation is shown in the semilog plot in Fig.\ref{fig_sample_e} for the decay of the relative  root mean square error with amplitude  $A=0.1$, compared with $A=0$.
The enhanced convergence and diffusion 	by a mixture velocity filed is a classical topic in the fluid dynamics and PDE analysis \cite{fannjiang1994convection, constantin2008diffusion}. This technique has a promising applications in sampling, Markov chain Monte Carlo and non-convex optimizations. We will leave this line of research as a future study.

 \begin{figure}
\includegraphics[scale=0.57]{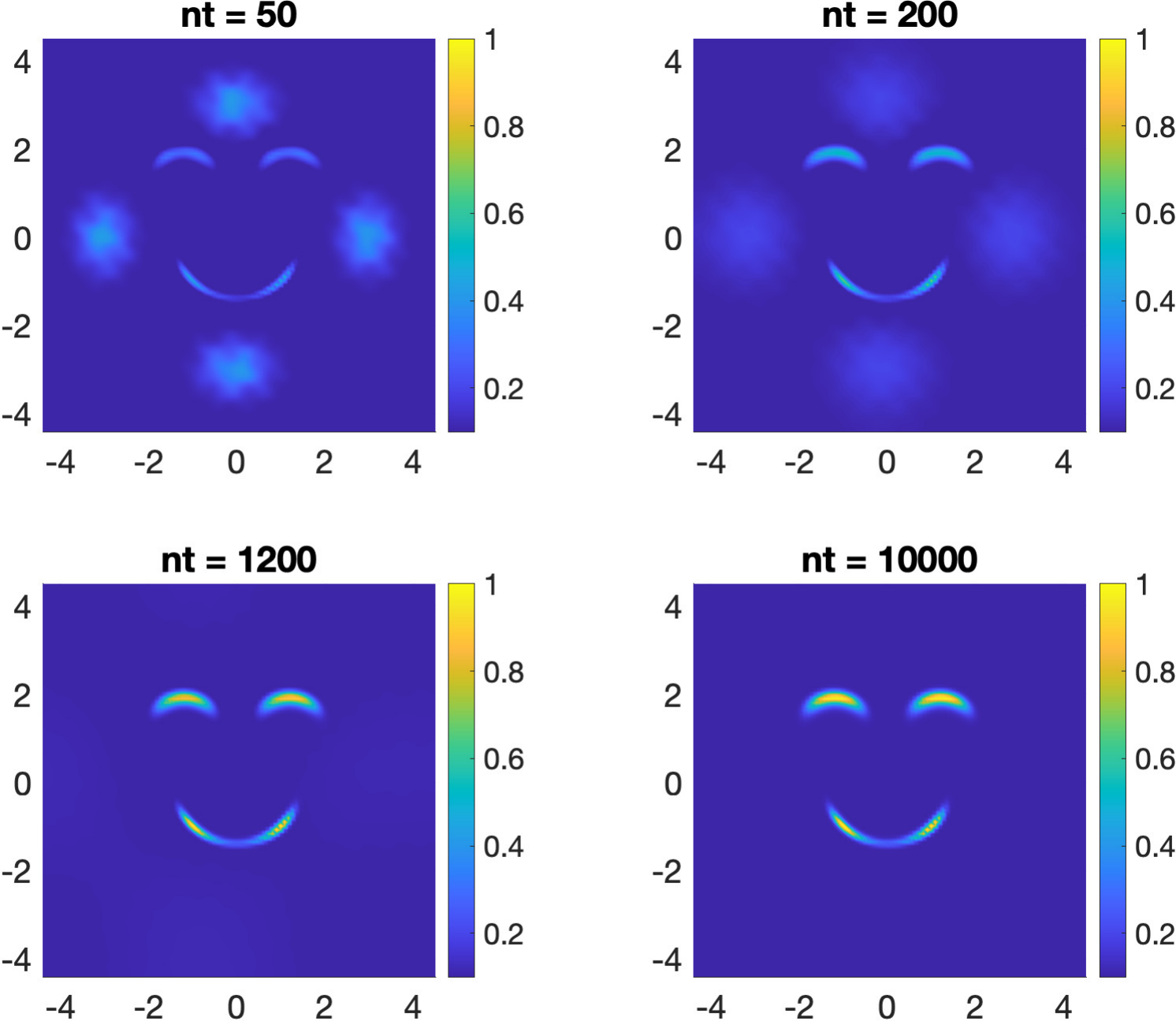} 
  \caption{The  time evolution of dynamic solution for sampling the smiling triple banana using  \eqref{F_num}. The amplitude of the mixture velocity is $A=0.1$ and snapshots are shown  at iterations $n_t=50, 200, 1200, 10000$. }\label{fig_sample_time}
   \end{figure}
 \begin{figure}
 \begin{center}
\includegraphics[scale=0.27]{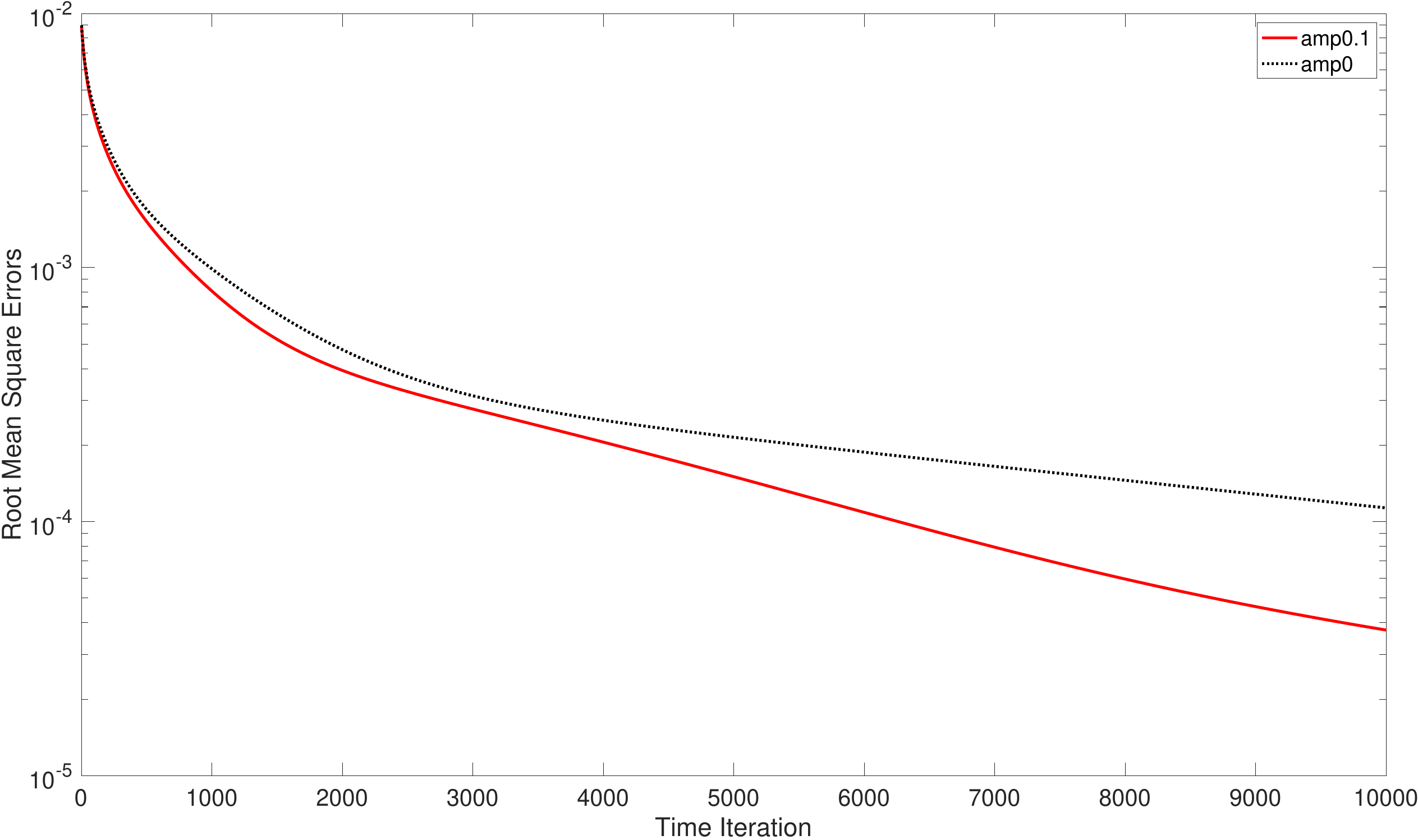} 
\caption{The relative  root mean square error between the dynamic density and the target triple banana density is shown in semilog plot w.r.t  time iterations. With different amplitude $A=0$ and $0.1$, we observe the enhancement of convergence brought by the incompressible mixture velocity $\vec{u}$.}\label{fig_sample_e} 
 \end{center} 
 \end{figure}

\subsubsection{Example: image transformation immersed in an incompressible flow}
In this example, we use  van Gogh's `The Starry Night' to simulate an image immersed in an irreversible dynamics but still converge to a given target image.
Choose an initial image with the same village view but with a purely blue sky, as shown in Fig.\ref{fig_VG1}(left). The target image `The Starry Night' is shown in Fig.\ref{fig_VG1}(right).
Each matrix exacted from these two images contains values of a color mode (R or G or B) and are both $N=256$ pixels in width and $M=203$
pixels in height.
\begin{figure}
\begin{center}
\includegraphics[scale=0.35]{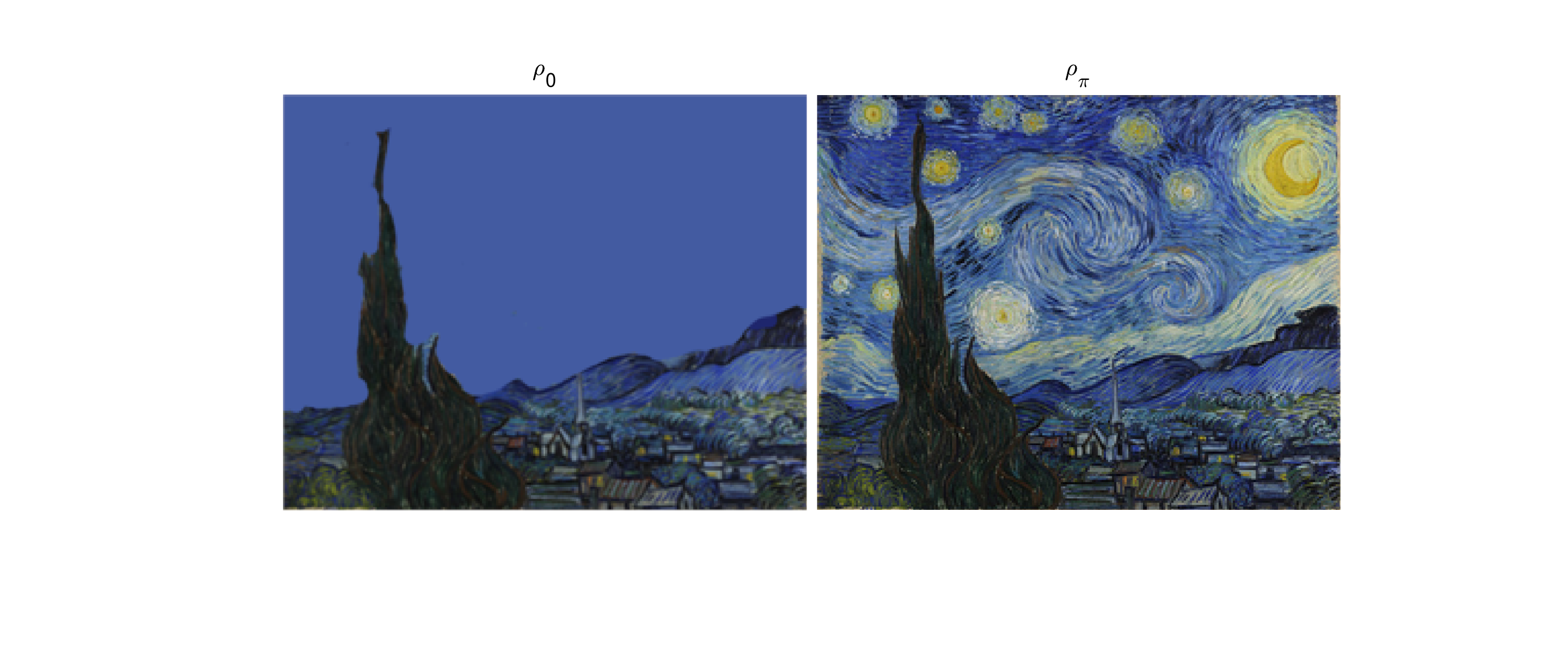} 
\caption{The  initial image (left) and the target image: The Starry Night(right).}\label{fig_VG1}
\end{center}
\end{figure}

Set the computational parameters as diffusion constant $D=0.4$, time step $\Delta t = 0.01$, uniform grid size $\Delta x = \frac{\pi}{N}, \, \Delta y=\frac{\pi}{M}$. Then with the amplitude of the mixture velocity $A=1000$ and the normalized wave number $k=8$, the time evolution of dynamic density $\rho_t$ is shown at time iteration $n_t=5, 80, 400, 2000$ in Fig.\ref{fig_VG_time}. Starting from a purely blue sky, we can see the night sky immersed in the incompressible flow, which is discretized using \eqref{streamU} with \eqref{psi},  becomes starry, unbalanced and distorted. At $n_t=2000$, the image is very close to the target image while the more unbalanced inbetweening image is shown at $n_t=400$.
Moreover, the relative  root mean square error between the dynamic solution $\rho_t$ and the target density $\pi$ are shown  w.r.t time iterations using semilog plot in Fig.\ref{fig_VG_e}.  We still observe that larger amplitude of the mixture velocity has an enhancement for the convergence of dynamic solution to its steady state.

\begin{figure}
\begin{center}
\includegraphics[scale=0.53]{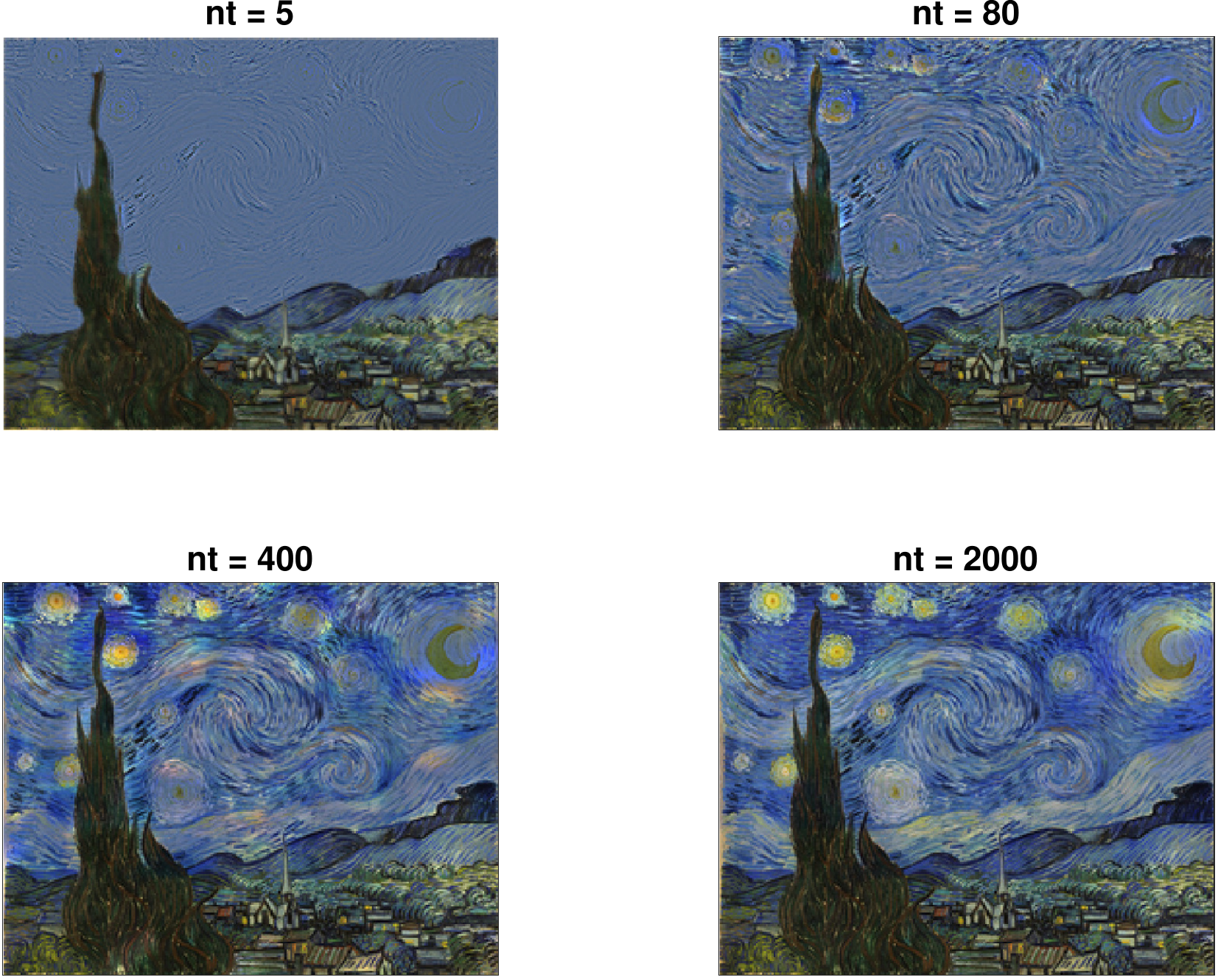} 
\caption{The  time evolution of image transformations immersed in an incompressible flow using scheme  \eqref{F_num}. The amplitude of the mixture velocity is $A=1000$ and snapshots are shown  at iterations $n_t=5, 80, 400, 2000$.} \label{fig_VG_time}
\end{center}
\end{figure}

\begin{figure}
\begin{center}
\includegraphics[scale=0.27]{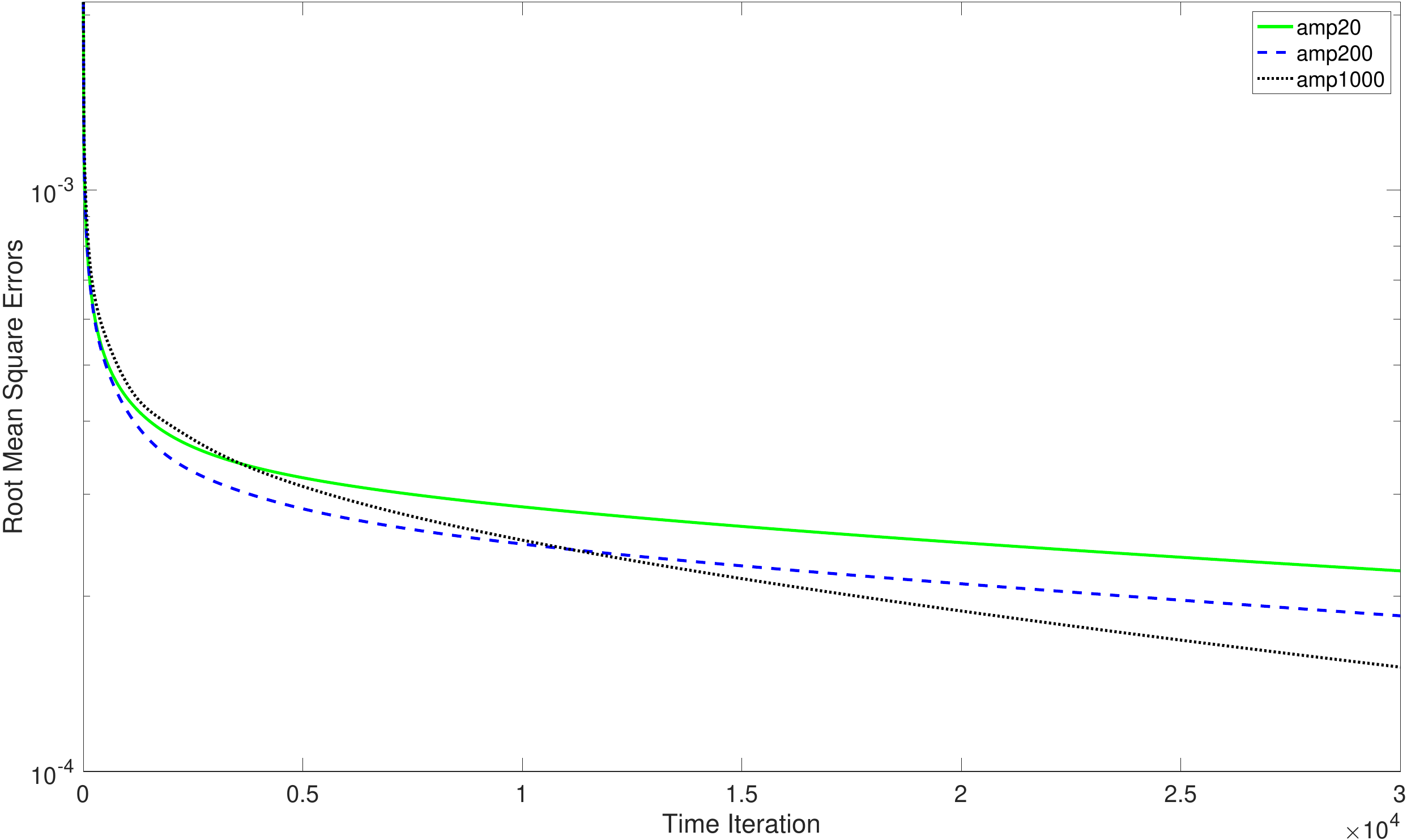} 
\caption{The relative  root mean square error between the dynamic image  in blue color-modes and the target image is shown in semilog plot w.r.t  time iterations. Enhanced convergence is shown with different amplitude $A=20,\, 200,\, 1000$.}\label{fig_VG_e}
\end{center}
\end{figure}

\subsection{Simulations for general irreversible dynamics without information of invariant measure}\label{sec_5.2van}
In this section, assume in  irreversible drift-diffusion process \eqref{sde-y},  we only know  the  drift vector field $\vec{b}$. Then we use the upwind scheme \eqref{mp} in 2D structured grids to simulate a  stochastic  Van der Pol oscillator.

\subsubsection{Example: Stochastic  Van der Pol oscillator  model} The famous Van der Pol model is first proposed by Van der Pol to describe electrical circuit and also has numerous extended applications in biology, pharmacology  and seismology. For instance the  FitzHugh-Nagumo model describing the  excitation and propagation  of sodium and potassium ions in a neuron \cite{Izhikevich:2006}. There are lots of classical investigations on the limit circle, bifurcations and chaos on this model; c.f. \cite{varigonda2001dynamics}.

To illustrate our numerical scheme for the irreversible dynamics without steady state information,     consider a stochastic version of  Van der Pol oscillator  model
\begin{equation}
\begin{aligned}
{\ud x} =\alpha(  x - \frac{x^3}{3}+y) \ud t+ \sqrt{2 \eps} \ud B;\\
{\ud y} =(\delta - x) \ud t+ \sqrt{2 \eps} \ud B
\end{aligned}
\end{equation}
with a set of classical parameters $\alpha=10$ and $\delta =0$ or $\delta=1$.  After adding the Brownian motion with $\eps>0$, this is a typical example which can be decomposed as a gradient flow part and a Hamiltonian part.

The corresponding Fokker-Planck equation is
\begin{equation}\label{FP_van}
\begin{aligned}
\pt_t \rho =\nabla \cdot \bbs{\eps \nabla \rho - \vec{b} \rho}; \qquad \vec{b} = \big(\alpha(  x - \frac{x^3}{3}+y), \,   (\delta - x)\big).
\end{aligned}
\end{equation}
We take domain as $\Omega=[-3,4]\times[-3,3]$, noise level as $\eps=0.1$, computational parameters as $\Delta t=0.05$, $\Delta x=0.07, \Delta y=0.06$ and take the no-flux boundary condition \eqref{bc}. 
Then based on \eqref{num355} (i.e., \eqref{Van_scheme}), starting from the  initial density 
$
\rho_0 =0.2,
$
we compute the time evolution of the density function $\rho_t$. 

With parameter $\delta=0$, the numerical solution at time iteration $n_t = 1000, 5000, 40000$ are shown in Fig \ref{fig_Lcircle}. Meanwhile, the numerical steady state $\pi^\8$ is computed by setting the time iteration as $n_t = 200000$, because in Proposition \ref{prop_pi_con} we have proved the exponential ergodicity for the unconditionally explicit scheme \eqref{timeG}. The root mean square error between the numerical solution $\rho_t$ and the numerical steady state $\pi^\8$ is shown in the semilog plot in Fig \ref{fig_Lcircle}(downright) in terms of time iterations. We can see for $\delta=0$, the steady state density tends to concentrate near a large limit circle in Fig \ref{fig_Lcircle} (downleft), which is the stable limit circle for the deterministic  Van der Pol oscillator in the large damping regime (a.k.a. relaxation oscillation).
\begin{figure}
\begin{center}
\includegraphics[scale=0.52]{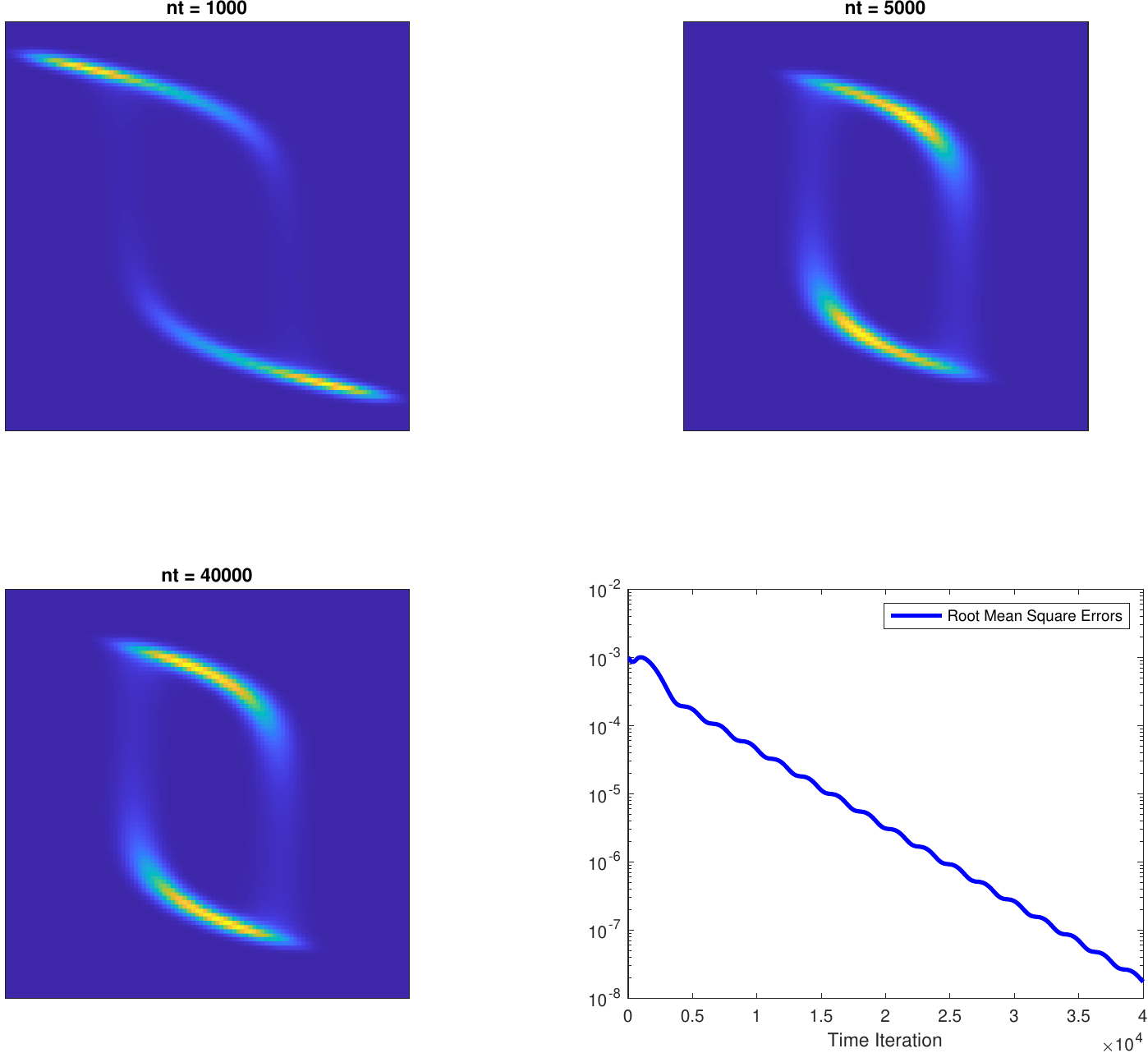} 
\caption{The time evolution of density function to \eqref{FP_van} with $\delta=0$. At time iteration $n_t=40000$, the steady state tends to concentrate near the larger limit circle (downleft). The root mean square error between the numerical solution $\rho_t$ and the numerical steady state $\pi^\8$ is shown in the semilog plot (downright).   }\label{fig_Lcircle}
\end{center}
\end{figure}

As a comparison, with different parameter $\delta=1$, the numerical solution at time iteration $n_t = 1000, 5000, 40000$ are shown in Fig \ref{fig_Scircle}. The root mean square error between the numerical solution $\rho$ and the numerical steady state $\pi^\8$ is shown in the semilog plot in Fig \ref{fig_Scircle}(downright), where the numerical steady state is still obtained by setting the time iteration as $n_t = 200000$. We can see clearly the steady state density now  concentrates near a smaller region in Fig \ref{fig_Scircle}(downleft). Indeed, this small region is near another smaller limit circle for the deterministic  Van der Pol oscillator, but due to the random noise, the  steady state density does not exactly concentrate only on the smaller limit circle.
\begin{figure}
\begin{center}
\includegraphics[scale=0.52]{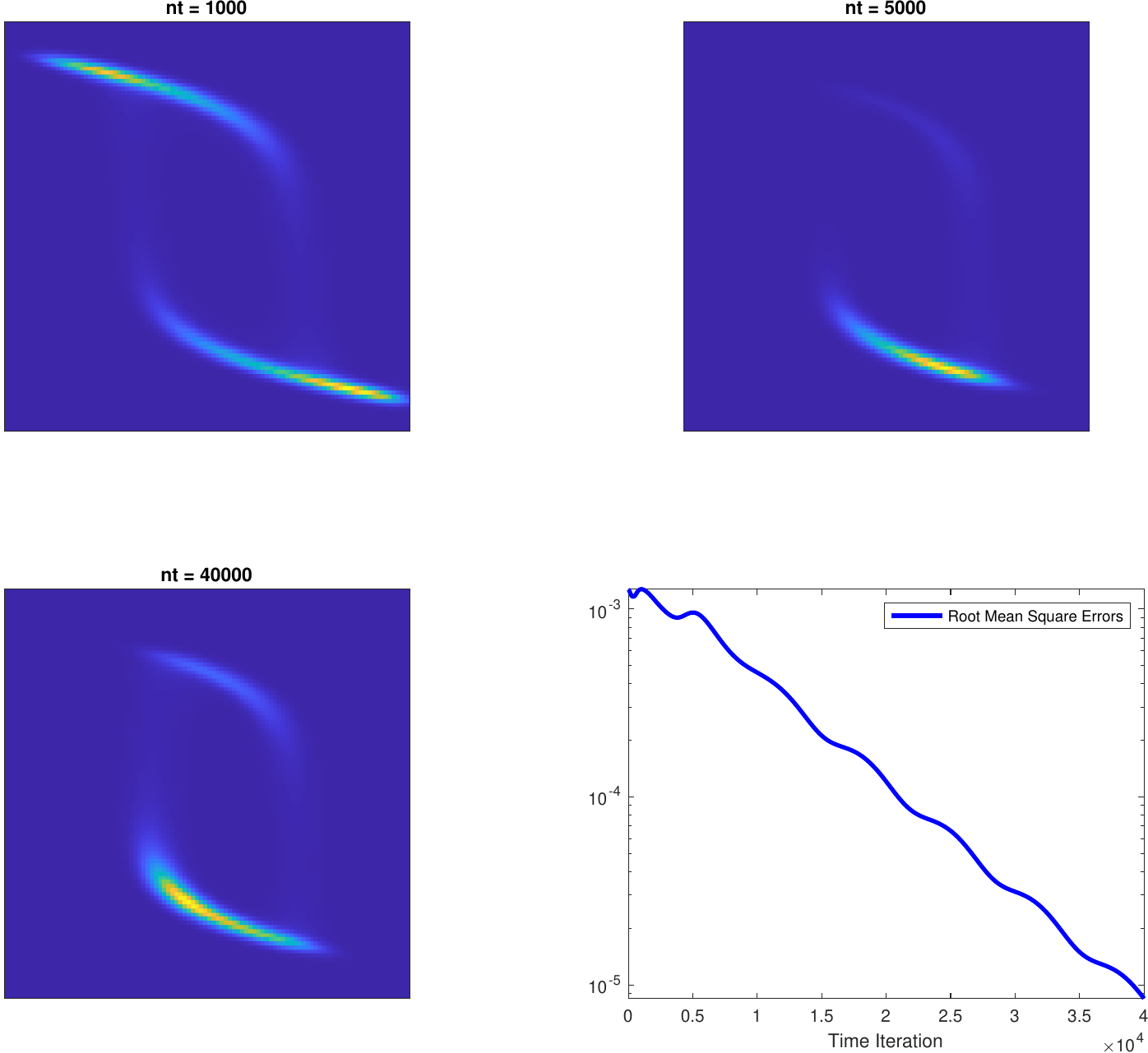} 
\caption{The time evolution of density function to \eqref{FP_van} with $\delta=1$. The steady state tends to concentrate near the smaller limit circle (downleft). In terms of time iterations, the root mean square error  between the numerical solution $\rho$ and the numerical steady state $\pi$ is shown in the semilog plot (downright). }\label{fig_Scircle}
\end{center}
\end{figure}

\section*{Acknowledgements}
Jian-Guo Liu was supported in part by NSF under awards DMS-2106988 and by NSF RTG grant DMS-2038056. Yuan Gao was supported by NSF under awards DMS-2204288.

\bibliographystyle{plain}
\bibliography{tst_ng}

\appendix

\section{Finite volume scheme for 2D structured grids}

\subsection{$\pi$-symmetric upwind scheme in 2D  with a given invariant measure}\label{app1}

After choosing $\vec{u}$ satisfying \eqref{streamU}, we use \eqref{F_num} to present the $\pi$-symmetric upwind scheme in a structured 2D domain $\Omega:=[a,b]\times[c,d].$ 

\begin{enumerate}[(i)]
\item Define the rectangle cells as
\begin{equation}\label{A1}
C_{ij} = ((i-1)\Delta x, i \Delta x) \times ((j-1)\Delta y, j\Delta y), \quad i=1, \cdots, N, \,\, j=1, \cdots, M.
\end{equation}
Denote the approximated density on $C_{ij}$ as $\rho_{i,j}$; Denote the  cell centers $(x_{i},y_{j})$ as
\begin{equation}\label{A2}
x_{i} = a+ (i-\frac12)\Delta x, \quad y_{j}= c+ (j-\frac12) \Delta y,  \quad i=1, \cdots, N, \,\, j=1, \cdots, M;
\end{equation}
\item 
Denote the discrete drift $(\vec{u}\cdot \vec{n})_{i,j}|\Gamma_{i.j}|$ of cell $C_{ij}$ on each (right/left/up/down) faces $\Gamma_{ij}$ along outer normal $\vec{n}$  as
\begin{align*}
 (\vec{u}\cdot \vec{n})^R_{i,j} \Delta y = \psi(x_i+\frac{\Delta x}{2}, y_j+ \frac{\Delta y}{2}) - \psi(x_i+\frac{\Delta x}{2}, y_j- \frac{\Delta y}{2}),\\
 (\vec{u}\cdot \vec{n})^L_{i,j} \Delta y = \psi(x_i-\frac{\Delta x}{2}, y_j- \frac{\Delta y}{2}) - \psi(x_i-\frac{\Delta x}{2}, y_j+ \frac{\Delta y}{2}),\\
 (\vec{u}\cdot \vec{n})^U_{i,j} \Delta x = \psi(x_i-\frac{\Delta x}{2}, y_j+ \frac{\Delta y}{2}) - \psi(x_i+\frac{\Delta x}{2}, y_j+ \frac{\Delta y}{2}),\\
       (\vec{u}\cdot \vec{n})^D_{i,j} \Delta x = \psi(x_i+\frac{\Delta x}{2}, y_j- \frac{\Delta y}{2}) - \psi(x_i-\frac{\Delta x}{2}, y_j- \frac{\Delta y}{2}).
\end{align*}
\item Based on \eqref{F_num}, using the negative part of the discrete drift above, define the  inward flux $F$  into cell $C_{ij}$ from four cell faces (right/left/up/down) $\Gamma_{ij}$ as
\begin{equation}
\begin{aligned}
&F^{\text{R}}_{i, j} = \Delta y \bbs{ \frac{D(\pi_{i+1,j}+\pi_{i,j})}{2\Delta x } +(\vec{u}\cdot \vec{n})^{R-}_{i,j} }\bbs{ \frac{\rho_{i+1, j}}{\pi_{i+1,j}} - \frac{\rho_{i,j}}{\pi_{i,j}}} ,\\
&F^{\text{L}}_{i, j} = \Delta y \bbs{ \frac{D(\pi_{i-1,j}+\pi_{i,j})}{2\Delta x }+ (\vec{u}\cdot \vec{n})^{L-}_{i,j} }\bbs{ \frac{\rho_{i-1, j}}{\pi_{i-1,j}} - \frac{\rho_{i,j}}{\pi_{i,j}}} ,\,\, i=2, \cdots, N-1, \,\, j=1, \cdots, M;\\
&F^{\text{U}}_{i, j} = \Delta x \bbs{ \frac{D(\pi_{i,j+1}+\pi_{i,j})}{2\Delta y} + (\vec{u}\cdot \vec{n})^{U-}_{i,j}}\bbs{ \frac{\rho_{i, j+1}}{\pi_{i,j+1}} - \frac{\rho_{i,j}}{\pi_{i,j}}},\\
&F^{\text{D}}_{i, j} = \Delta x  \bbs{ \frac{D(\pi_{i,j-1}+\pi_{i,j})}{2\Delta y} +(\vec{u}\cdot \vec{n})^{D-}_{i,j}} \bbs{ \frac{\rho_{i, j-1}}{\pi_{i,j+1}} - \frac{\rho_{i,j}}{\pi_{i,j}}},\,\,i=1, \cdots, N, \,\, j=2, \cdots, M-1.
\end{aligned}
\end{equation} 
\item Impose the no-flux boundary condition
\begin{equation}\label{bc_p}
\begin{aligned}
F^{\text{L}}_{1, j} = F^{\text{R}}_{N, j}=0, \quad j=1, \cdots, M; \quad
F^{\text{D}}_{i,1} = F^{\text{U}}_{i, M} =0,\quad  i=1, \cdots, N.
\end{aligned}
\end{equation}
\end{enumerate}

Then the continuous-time finite volume scheme
is
\begin{equation}\label{fp2d_p}
\begin{aligned}
\dot{\rho}_{i,j}\Delta x \Delta y = F^{\text{R}}_{i, j}+ F^{\text{L}}_{i, j}+ F^{\text{U}}_{i, j} + F^{\text{D}}_{i, j}
\end{aligned}
\end{equation}
for $i=1, \cdots, N, \, j=1, \cdots, M$ with the no-flux boundary condition \eqref{bc_p}. Here $\dot{\rho}_{i,j}$ refers to the time derivative of $\rho_{i,j}$. 
Denote $g_{i,j}:=\frac{\rho_{i,j}}{\pi_{i,j}}$. Recast \eqref{fp2d_p} as a five-point scheme
\begin{align*}
&\pi_{i,j} \dot{g}_{i,j} = \frac{1}{\Delta x^2}\bbs{\frac{D(\pi_{i+1,j}+\pi_{i,j})}{2} +  \Delta x(\vec{u}\cdot \vec{n})^{R-}_{i,j}  } g_{i+1, j} +  \frac{1}{\Delta x^2}\bbs{\frac{D(\pi_{i-1,j}+\pi_{i,j})}{2} +  \Delta x (\vec{u}\cdot \vec{n})^{L-}_{i,j}} g_{i-1, j}  \\
&+  \frac{1}{\Delta y^2}\bbs{\frac{D(\pi_{i,j+1}+\pi_{i,j})}{2 } +  \Delta y (\vec{u}\cdot \vec{n})^{U-}_{i,j}} g_{i, j+1} + \frac{1}{\Delta y^2}\bbs{\frac{D(\pi_{i,j-1}+\pi_{i,j})}{2 } +  \Delta y (\vec{u}\cdot \vec{n})^{D-}_{i,j} } g_{i, j-1} - \lambda_{i,j} g_{i,j},\\
& \lambda_{i,j} := \frac{D(\pi_{i+1,j}+2\pi_{i,j} +\pi_{i-1,j})}{2\Delta x^2}+ \frac{D(\pi_{i,j+1}+2\pi_{i,j} +\pi_{i,j-1})}{2\Delta y^2} + \frac{(\vec{u}\cdot \vec{n})^{R-}_{i,j} + (\vec{u}\cdot \vec{n})^{L-}_{i,j}  }{\Delta x} + \frac{(\vec{u}\cdot \vec{n})^{U-}_{i,j} + (\vec{u}\cdot \vec{n})^{D-}_{i,j}   }{\Delta y}.
\end{align*}

Following the idea for the unconditionally stable explicit scheme \eqref{num355}, choose constant 
$\displaystyle a> \max_{i,j} \frac{\lambda_{i,j}}{\pi_{i,j}}.$
 Using constant $a>0$, we can construct   an  unconditionally stable explicit scheme, which can be regarded as a new Markov chain   with a transition probability $\tilde{K}^*$.
With $Q$ defined in \eqref{Q-p}, the time discretization is
\begin{equation}
\frac{\rho^{k+1}-\rho^k}{\Delta t} = Q^* \rho^k + a(\rho^k-\rho^{k+1}).
\end{equation}  
Thus the transition probability $\tilde{K}^*$ is given by
\begin{equation}\label{time-P}
\rho^{k+1} = \tilde{K}^* \rho^k, \quad \tilde{K}^*:=I+ \frac{\Delta t Q^*}{1+a \Delta t},\,\, \sum_j \tilde{K}_{ij} = 1,\,\, \tilde{K}_{ij} \geq 0.
\end{equation}
Notice we have chosen $\vec{u}$ satisfying \eqref{div0} and thus the equivalent flux $F_{ji}$ in \eqref{F_num} still defines a stochastic $Q$-matrix.
 With the notation $g_{i,j}=\frac{\rho_{i,j}}{\pi_{i,j}}$,  \eqref{time-P} reads as
 three parts: interiors, four sides, and four corners. 
For  interior cells $i=2, \cdots, N-1, \ j=2, \cdots, M-1$:
\begin{equation}\label{num2d_pi}
\begin{aligned}
 \rho^{k+1}_{i,j}& = \frac{\Delta t}{(1+a \Delta t)\Delta x^2}\bbs{\frac{D(\pi_{i+1,j}+\pi_{i,j})}{2}  +  \Delta x (\vec{u}\cdot \vec{n})^{R-}_{i,j}} g^k_{i+1, j} \\
 &+  \frac{\Delta t}{(1+a \Delta t)\Delta x^2}\bbs{\frac{D(\pi_{i-1,j}+\pi_{i,j})}{2} +  \Delta x(\vec{u}\cdot \vec{n})^{L-}_{i,j} } g^k_{i-1, j}   \\
&+  \frac{\Delta t}{(1+a \Delta t)\Delta y^2}\bbs{\frac{D(\pi_{i,j+1}+\pi_{i,j})}{2 } +  \Delta y (\vec{u}\cdot \vec{n})^{U-}_{i,j}} g^k_{i, j+1}\\
&+ \frac{\Delta t}{(1+a \Delta t)\Delta y^2}\bbs{\frac{D(\pi_{i,j-1}+\pi_{i,j})}{2 } +  \Delta y (\vec{u}\cdot \vec{n})^{D-}_{i,j} } g^k_{i, j-1} + \frac{ \pi_{i,j} + \Delta t(a \pi_{i,j}-\lambda_{i,j})}{1+a \Delta t} g^k_{i,j}. 
\end{aligned}
\end{equation}

For left side cells $i=1, \ j=2, \cdots, M-1$:
\begin{equation}
\begin{aligned}
 &\rho^{k+1}_{1,j} = \frac{\Delta t}{(1+a \Delta t)\Delta x^2}\bbs{\frac{D(\pi_{2,j}+\pi_{1,j})}{2} +  \Delta x (\vec{u}\cdot \vec{n})^{R-}_{1,j}} g^k_{2, j} \\
& \quad +  \frac{\Delta t}{(1+a \Delta t)\Delta y^2}\bbs{\frac{D(\pi_{1,j+1}+\pi_{1,j})}{2}+  \Delta y (\vec{u}\cdot \vec{n})^{U-}_{1,j} } g^k_{1, j+1} \\
&\quad + \frac{\Delta t}{(1+a \Delta t)\Delta y^2}\bbs{D\frac{(\pi_{1,j-1}+\pi_{1,j})}{2} +  \Delta y (\vec{u}\cdot \vec{n})^{D-}_{1,j} } g^k_{1, j-1} 
+ \frac{ \pi_{1,j} + \Delta t(a \pi_{1,j}-\lambda_{1,j})}{1+a \Delta t} g^k_{1,j},\\
& \lambda_{1,j} :=\frac{D(\pi_{2,j}+\pi_{1,j} )}{2\Delta x^2}+ \frac{D(\pi_{1,j+1}+2\pi_{1,j} +\pi_{1,j-1})}{2\Delta y^2} + \frac{(\vec{u}\cdot \vec{n})^{R-}_{1,j}  }{\Delta x} + \frac{(\vec{u}\cdot \vec{n})^{U-}_{1,j} + (\vec{u}\cdot \vec{n})^{D-}_{1,j} }{\Delta y}.
\end{aligned}
\end{equation}
Similar for the right, down and up side cells.

For left-down corner cell $i=1, \ j=1$:
\begin{equation}
\begin{aligned}
 &\rho^{k+1}_{1,1} = \frac{\Delta t}{(1+a \Delta t)\Delta x^2}\bbs{\frac{D(\pi_{2,1}+\pi_{1,1} )}{2} +  \Delta x  (\vec{u}\cdot \vec{n})^{R-}_{1,1}  } g^k_{2, 1} \\
 & +
   \frac{\Delta t}{(1+a \Delta t)\Delta y^2}\bbs{\frac{D(\pi_{1,2}+\pi_{1,1} )}{2} +  \Delta y(\vec{u}\cdot \vec{n})^{U-}_{1,1} } g^k_{1, 2} + \frac{ \pi_{1,1} + \Delta t(a \pi_{1,1}-\lambda_{1,1})}{1+a \Delta t} g^k_{1,1} \\
& \quad  \lambda_{1,1} :=\frac{D(\pi_{2,1}+\pi_{1,1} )}{2\Delta x^2}+ \frac{D(\pi_{1,2}+\pi_{1,1} )}{2\Delta y^2} + \frac{(\vec{u}\cdot \vec{n})^{R-}_{1,1} }{\Delta x} + \frac{(\vec{u}\cdot \vec{n})^{U-}_{1,1}}{\Delta y}.
\end{aligned}
\end{equation}
Similar for the left-up, right-down and right-up corner cells.

\subsection{Upwind scheme in 2D for general irreversible process}\label{app2}
Take a 2D domain as $\Omega:= [a,b]\times [c,d].$ 
We present the upwind scheme \eqref{mp} based on structured grids for a Fokker-Planck equation on the 2D domain $\Omega$. Let the grid size be $\Delta x= \frac{b-a}{N}, \, \Delta y = \frac{d-c}{M}$. 
\begin{enumerate}[(i)]
\item Define the rectangle cells and cell centers as \eqref{A1}, and \eqref{A2}.
\item Denote the drift 
 $
\vec{b}=: (u,v)
$ of cell $C_{ij}$ on the bisection point on edges as
\begin{equation}
u_{i\pm\frac12, j} = u(x_{i} \pm \frac12 \Delta x, y_j), \quad  v_{i, j\pm\frac12} = v(x_i, y_{j} \pm \frac12 \Delta y);
\end{equation}
\item Based on \eqref{mp}, define the  inward flux $F$  into cell $C_{ij}$ from four cell faces as
\begin{equation}
\begin{aligned}
&F^{\text{R}}_{i, j} = \Delta y \bbs{ \frac{D(\rho_{i+1, j} - \rho_{i,j})}{\Delta x} + u^-_{i+\frac12, j} \rho_{i+1,j}  - u^+_{i+\frac12, j} \rho_{i,j}},\\
&F^{\text{L}}_{i, j} = \Delta y \bbs{ \frac{D(\rho_{i-1, j} - \rho_{i,j})}{\Delta x} + u^+_{i-\frac12, j} \rho_{i-1,j}  - u^-_{i-\frac12, j} \rho_{i,j}},\,\,  i=2, \cdots, N-1, \,\, j=1, \cdots, M;\\
&F^{\text{U}}_{i, j} = \Delta x \bbs{ \frac{D(\rho_{i, j+1} - \rho_{i,j})}{\Delta y} + v^-_{i, j+\frac12} \rho_{i,j+1}  - v^+_{i, j+\frac12} \rho_{i,j}},\\
&F^{\text{D}}_{i, j} = \Delta x \bbs{ \frac{D(\rho_{i, j-1} - \rho_{i,j})}{\Delta y} + v^+_{i,j-\frac12j} \rho_{i,j-1}  - v^-_{i, j-\frac12} \rho_{i,j}},\,\,i=1, \cdots, N, \,\, j=2, \cdots, M-1.
\end{aligned}
\end{equation} 
\item Impose the no-flux boundary condition
\begin{equation}\label{bc}
\begin{aligned}
F^{\text{L}}_{1, j} = F^{\text{R}}_{N, j}=0, \quad j=1, \cdots, M; \quad 
F^{\text{D}}_{i,1} = F^{\text{U}}_{i, M} =0,\quad  i=1, \cdots, N.
\end{aligned}
\end{equation}
\end{enumerate}

Then the continuous-time finite volume scheme
is
\begin{equation}\label{fp2d}
\begin{aligned}
\dot{\rho}_{i,j}\Delta x \Delta y = F^{\text{R}}_{i, j}+ F^{\text{L}}_{i, j}+ F^{\text{U}}_{i, j} + F^{\text{D}}_{i, j}
\end{aligned}
\end{equation}
for $i=1, \cdots, N, \, j=1, \cdots, M$ with the no-flux boundary condition \eqref{bc}. Here $\dot{\rho}_{i,j}$ refers to the time derivative of $\rho_{i,j}$. 
Recast \eqref{fp2d} as a five-point scheme
\begin{align*}
\dot{\rho}_{i,j} = \frac{1}{\Delta x^2}&\bbs{D +  \Delta x u^-_{i+\frac12, j} } \rho_{i+1, j} +  \frac{1}{\Delta x^2}\bbs{D +  \Delta x u^+_{i-\frac12, j} } \rho_{i-1, j}  \\
&+  \frac{1}{\Delta y^2}\bbs{D +  \Delta y v^-_{i, j+\frac12} } \rho_{i, j+1} + \frac{1}{\Delta y^2}\bbs{D +  \Delta y v^+_{i, j-\frac12} } \rho_{i, j-1} - \lambda_{i,j} \rho_{i,j},\\
& \lambda_{i,j} := \frac{2D}{\Delta x^2}+ \frac{2D}{\Delta y^2} + \frac{u^+_{i+\frac12, j} +u^-_{i-\frac12, j} }{\Delta x} + \frac{v^+_{i, j+\frac12}+ v^-_{i, j-\frac12} }{\Delta y}.
\end{align*}

Following the unconditionally stable explicit scheme \eqref{num355}, define $a:= \max_{i,j} \lambda_{i,j}$. Then \eqref{num355} reads as
 three parts: interiors, four sides, and four corners.

For  interior cells $i=2, \cdots, N-1, \ j=2, \cdots, M-1$:
\begin{equation}\label{Van_scheme}
\begin{aligned}
 &\rho^{k+1}_{i,j} = \frac{\Delta t}{(1+a \Delta t)\Delta x^2}\bbs{D +  \Delta x u^-_{i+\frac12, j} } \rho^k_{i+1, j} +  \frac{\Delta t}{(1+a \Delta t)\Delta x^2}\bbs{D +  \Delta x u^+_{i-\frac12, j} } \rho^k_{i-1, j}  \\
&+  \frac{\Delta t}{(1+a \Delta t)\Delta y^2}\bbs{D +  \Delta y v^-_{i, j+\frac12} } \rho^k_{i, j+1} + \frac{\Delta t}{(1+a \Delta t)\Delta y^2}\bbs{D +  \Delta y v^+_{i, j-\frac12} } \rho^k_{i, j-1} + \frac{ 1 + \Delta t(a-\lambda_{i,j})}{1+a \Delta t} \rho^k_{i,j} \\
&\quad  \lambda_{i,j} := \frac{2D}{\Delta x^2}+ \frac{2D}{\Delta y^2} + \frac{u^+_{i+\frac12, j} +u^-_{i-\frac12, j} }{\Delta x} + \frac{v^+_{i, j+\frac12}+ v^-_{i, j-\frac12} }{\Delta y}.
\end{aligned}
\end{equation}

For left side cells $i=1, \ j=2, \cdots, M-1$:
\begin{equation}
\begin{aligned}
 &\rho^{k+1}_{1,j} = \frac{\Delta t}{(1+a \Delta t)\Delta x^2}\bbs{D +  \Delta x u^-_{\frac32, j} } \rho^k_{2, j} 
+  \frac{\Delta t}{(1+a \Delta t)\Delta y^2}\bbs{D +  \Delta y v^-_{1, j+\frac12} } \rho^k_{1, j+1} \\
&\quad + \frac{\Delta t}{(1+a \Delta t)\Delta y^2}\bbs{D +  \Delta y v^+_{1, j-\frac12} } \rho^k_{1, j-1} 
+ \frac{ 1 + \Delta t(a-\lambda_{i,j})}{1+a \Delta t} \rho^k_{i,j}\\
& \lambda_{1,j} := \frac{D}{\Delta x^2}+ \frac{2D}{\Delta y^2} 
+ \frac{u^+_{\frac32, j}  }{\Delta x} + \frac{v^+_{1, j+\frac12}+ v^-_{1, j-\frac12} }{\Delta y}.
\end{aligned}
\end{equation}
Similar for the right, down and up side cells.

For left-down corner cell $i=1, \ j=1$:
\begin{equation}
\begin{aligned}
 &\rho^{k+1}_{1,1} = \frac{\Delta t}{(1+a \Delta t)\Delta x^2}\bbs{D +  \Delta x u^-_{\frac32, 1} } \rho^k_{2, 1} +  \frac{\Delta t}{(1+a \Delta t)\Delta y^2}\bbs{D +  \Delta y v^-_{1, \frac32} } \rho^k_{1, 2} 
+ \frac{ 1 + \Delta t(a-\lambda_{1,1})}{1+a \Delta t} \rho^k_{i,j}\\
& \quad  \lambda_{1,1} := \frac{D}{\Delta x^2}+ \frac{D}{\Delta y^2} + \frac{u^+_{\frac32, 1} }{\Delta x} + \frac{v^+_{1,\frac32}}{\Delta y}.
\end{aligned}
\end{equation}
Similar for the left-up, right-down and right-up corner cells.

\begin{rem}
Since this example has rectangle grids, we can also define the flux $F$ in the $x$-direction and $y$-direction respectively
\begin{equation}
\begin{aligned}
&F_{i-\frac12, j} = F^L_{i,j}, \quad  i=2, \cdots, N, \,\, j=1, \cdots, M,\\
&F_{i, j-\frac12} = F^D_{i,j},\quad   i=1, \cdots, N, \,\, j=2, \cdots, M,\\
&F_{\frac12, j} = F_{N+\frac12, j}=0, \quad  j=1, \cdots, M, \qquad F_{i, \frac12} = F_{i,M+\frac12}=0, \quad  i=1, \cdots, N.
\end{aligned}
\end{equation}
Then \eqref{fp2d} can be recast as
\begin{equation}
\dot{\rho}_{i,j}\Delta x \Delta y  + F_{i+\frac12, j} - F_{i-\frac12,j} + F_{i, j+\frac12} - F_{i,j-\frac12} =0.
\end{equation}
\end{rem}

{ \section{Comparison between our upwind scheme with other finite volume schemes}\label{app:SG}
The famous Scharfetter–Gummel(SG) scheme was first proposed in \cite{scharfetter1969large} for 1D semiconductor device equation and there are many mathematical analysis and extensions on it, c.f., \cite{markowich1988inverse, xu1999monotone} and recently summarized as $B$-schemes in \cite{chainais2020large}.  We  remark it also has the $Q$-matrix structure and  the $\pi$-symmetric decomposition. Indeed,  the  finite volume scheme can be reformulated as
\begin{equation}
\begin{aligned}
\frac{\ud}{\ud t} \rho_i|C_i| = &\sum_j \frac{D|\Gamma_{ij}|}{|\my_j-\my_i|} \Big[ B\bbs{ \frac1D(\vec{b}\cdot \vec{n})_{ij}|\my_j-\my_i| } \rho_j - B\bbs{ \frac1D(\vec{b}\cdot \vec{n})_{ji}|\my_j-\my_i| }\rho_i  \Big];\\& \text{Scharfetter–Gummel:} \quad  B(x) = \frac{x}{e^x-1},\,\,x\neq0,\quad  B(0)=1;\\
& \text{Upwind scheme:} \quad  B(x) = 1+x^-.
\end{aligned}
\end{equation}
Since $B(x)> 0$, then with $Q_{ji}=\frac{D|\Gamma_{ij}|}{|\my_j-\my_i||C_j|}  B\bbs{\frac1D(\vec{b}\cdot \vec{n})_{ij}|\my_j-\my_i| }$, we have exactly same decomposition as \eqref{decom}. Specially, for the reversible case, let $\varphi$ be the potential such that $\vec{b}=-\nabla \varphi$ and thus $\pi\propto e^{-\varphi/D}$. Take the approximation
$(\vec{b}\cdot \vec{n})_{ij}|\my_j-\my_i| \approx \varphi_i - \varphi_j,$
then for $B(x)=\frac{x}{e^{x}-1}$, one can directly verify the detailed balance condition for $\pi =e^{-\varphi_i/D}$ 
$$Q_{ji}\pi_j|C_j|=\frac{|\Gamma_{ij}|}{|\my_j-\my_i|} \frac{\varphi_i -\varphi_j}{e^{\varphi_i/D}-e^{ \varphi_j/D}} = \frac{|\Gamma_{ij}|}{|\my_j-\my_i|} \bbs{\Lambda(\pi_i^{-1}, \pi_j^{-1})}^{-1} = Q_{ij}\pi_i|C_i|,$$
where $\Lambda(x,y)=\frac{x-y}{\log x -\log y}$ is the so-called logarithmic mean.  Hence in \eqref{decom}, $F_{ji}^\pi=0$ while
the symmetric coefficient $\alpha_{ij}$ in \eqref{decom} then becomes
\begin{align*}
\alpha_{ij} = Q_{ji}\pi_j|C_j| = \frac{D|\Gamma_{ij}|}{|\my_j-\my_i|}\bbs{ \int_0^1 \frac{1}{e^{-\varphi_i/D+s(\varphi_i - \varphi_j)/D}} \ud x }^{-1}, 
\end{align*}
where the average is the harmonic average of $\pi$ with  linear interpolation for $\varphi$ along edge $[\my_i,\my_j]$; c.f. \cite{markowich1988inverse, markowich1985stationary}. If replacing the harmonic average above by the arithmetic  mean of $\pi_i$ and $\pi_j$, then SG scheme becomes our  scheme \eqref{mp-pi} in the reversible case, i.e., $\vec{u}=0$ in \eqref{nb}.
}

\section{Remarks on Hill's derivation on the positive entropy production rate}\label{app:Hill}
We remark that \textsc{Hill} derives \eqref{entropyP} by calculating the change of free energy individually as follows. Denote $G_i$ as the Gibbs free energy at site $i$. Then the chemical potential at site $i$ at the steady state is
\begin{equation}\label{cp}
\mu_i = G_i+\mu_{M} + kT \log \pi_i.
\end{equation}
Here $\mu_{M}$ is the energy provided by the environment (heat bath in the open system) which  probably has different values for different orientations of a circulation in a biochemical reaction. For instance, in a simple Enzyme-substrate-product example \cite[Section 9]{hill2005free}, $\mu_M = \mu_S$ is the chemical potential of the substrate for a positive circulation, while  $\mu_M = \mu_P$ is the chemical potential of the product for an opposite circulation.
Then by the Arrhenius law, the transition rate from site $i$ to site $j$ is given by
\begin{equation}
\frac{Q_{ij}}{Q_{ji}}= e^{\frac{G_i- G_j + \mu_M}{kT}},  \quad \mu_M=\mu_S \text{ for } i=1,\,\,\, \mu_M = -\mu_P \text{ for }j=1.
\end{equation}
This together with \eqref{cp}, implies
\begin{equation}
\mu_i-\mu_j = kT \log \frac{Q_{ij} \pi_i}{Q_{ji}\pi_j}.
\end{equation}
Thus at the steady state, the individual energy change (in the unit of energy per unit time) due to the nonzero flux $i\to j$ is always
\begin{equation}
F^\pi_{ij} \bbs{\mu_i-\mu_j} = \bbs{Q_{ij}\pi_i - Q_{ji}\pi_j} kT \log \frac{Q_{ij} \pi_i}{Q_{ji}\pi_j} \geq 0.
\end{equation}
Adding each sites together and with our notation, we obtain \eqref{entropyP}.
{\blue Using Kolmogorov's circulation criterion, we can check
\begin{equation}
\frac{Q_{12}Q_{23}\cdots Q_{n1}}{Q_{21}Q_{32}\cdots Q_{1n}} = e^{\frac{\mu_s -\mu_p}{kT}}\neq 1.
\end{equation}
In fact, if $Q$ satisfies Kolmogorov's circulation criterion, then by construction proof, one can define a $\pi_i:= \frac{Q_{12}Q_{23}\cdots Q_{ki}}{Q_{21}Q_{32}\cdots Q_{ik}}$ and prove it is uniquely defined and satisfies detailed balance condition $Q_{ji}\pi_j = Q_{ij}\pi_i.$

\end{document}